\documentclass{article}
\usepackage{amsthm}
\usepackage{amssymb}
\usepackage{amsfonts}
\usepackage{color}
\usepackage{amsmath}
\begin{document}

\newcommand{\A}{\cal{A}}
\newcommand{\R}{\cal{R}}
\newcommand{\F}{\mathbb{F}}
\newcommand{\C}{\cal{C}}
\newcommand{\N}{\cal{N}}
\newcommand{\cH}{\cal{H}}
\newcommand{\G}{\cal{G}}
\newcommand{\Z}{\cal{Z}}
\newcommand{\T}{\cal{T}}
\newcommand{\Q}{\cal{Q}}
\newcommand{\cS}{\cal{S}}
\newcommand{\W}{\cal{W}}
\newcommand{\E}{\cal{E}}
\newcommand{\D}{\cal{D}}
\newcommand{\cP}{\cal{P}}
\newcommand{\cL}{\cal{L}}
\newcommand{\M}{\cal{M}}
\newcommand{\V}{\cal{V}}
\newcommand{\B}{\cal{B}}
\newcommand{\U}{\cal{U}}
\newcommand{\K}{\cal{K}}
\newcommand{\CC}{{\mathbb C}}
\newcommand{\Aa}{{\mathbb A}}
\newcommand{\FF}{{\mathbb F}}
\newcommand{\FFF}{\overline{\mathbb F}}
\newcommand{\ov}{\overline}
\newcommand{\eps}{\varepsilon}
\newcommand{\wh}[1]{\widehat{#1}}
\newtheorem{theorem}{Theorem}[section]
\newtheorem{proposition}[theorem]{Proposition}
\newtheorem{lemma}[theorem]{Lemma}
\newtheorem{corollary}[theorem]{Corollary}
\newtheorem{remark}[theorem]{Remark}
\newtheorem{definition}[theorem]{Definition}            
\newtheorem{example}[theorem]{Example}            
\newtheorem{conjecture}[theorem]{Conjqecture}
\newtheorem{question}[theorem]{Question}

\title{Order automorphisms of effect algebras\thanks{The author was supported by the grants P1-0288 and N1-0368 from ARIS, Slovenia.}}
\author{Peter \v Semrl\footnote{Institute of Mathematics, Physics, and Mechanics, Jadranska 19, SI-1000 Ljubljana, Slovenia, peter.semrl@fmf.uni-lj.si; Faculty of Mathematics and Physics, University of Ljubljana, Jadranska 19, SI-1000 Ljubljana, Slovenia}
        }

\date{}
\maketitle

\begin{abstract}
An elegant description of the general form of order automorphisms of effect algebras has been known in the complex case. We present a much simpler proof based on the projective geometry which works also in the real case. As an application we classify order isomorphic pairs of matrix intervals and describe the general form of order isomorphisms for any pair of isomorphic matrix intervals. 
\end{abstract}
\maketitle

\bigskip
\noindent AMS classification:  15B57, 47B49, 81R15.

\bigskip
\noindent
Keywords: Self-adjoint operator; Loewner order; Order isomorphism; Effect algebra.

\section{Introduction}

Let $H$ be a real or complex Hilbert space, $\dim H \ge 2$. We denote by $B(H)$ and $S(H)\subset B(H)$ the algebra of all bounded linear operators on $H$ and the subset of all self-adjoint operators, respectively. An operator $A \in S(H)$ is said to be positive, $A \ge 0$, if $\langle Ax , x \rangle \ge 0$ for every $x \in H$. We write $A > 0$ when $A \ge 0$ and $A$ is invertible. For $A, B \in S(H)$ we define $A \le B \iff B-A \ge 0$ and similarly, $A < B \iff B-A> 0$. It is well-known that if $H$ is a complex Hilbert space and $A \in S(H)$ is positive, then there exists a unique positive square root $A^{1/2}$. It is less known that this is true in the real case as well, see for example \cite{SeT}.
 Let $M,N \subset S(H)$ be any subsets. A map $\phi : M \to N$ is an order isomorphism if it is bijective and for every pair $X,Y \in M$ we have $X \le Y \iff \phi (X) \le \phi (Y)$.

The general form of order automorphisms of $S(H)$ in the complex case was described by Moln\' ar \cite{Mo1}.  Assume that $H$ is a complex Hilbert space with $\dim H \ge 2$
and  $\phi : S(H) \to S(H)$ is an order automorphism. Moln\' ar proved that then there exist an invertible bounded linear or conjugate-linear operator
$T : H \to H$ and $S \in S(H)$ such that
$$
\phi (X) = TXT^\ast + S
$$
for every $X\in S(H)$. We continue with the cones of positive operators
and positive definite operators. For them we use the interval notation, that is,  $$[0, \infty ) = \{ A \in S(H) \, : \, 
A \ge 0 \}$$ and  $$(0, \infty ) = \{ A \in S(H) \, : \, 
A >  0 \}.$$ In \cite{Mo1} it was proved that for every order automorphism $\phi$ of $[0, \infty)$ 
 there exists an invertible bounded linear or conjugate-linear operator
$T : H \to H$ such that
$
\phi (X) = TXT^\ast
$
for every $X\in [0, \infty)$. Similarly, a map
$\phi$ is an order automorphism of $(0, \infty)$ if and only if  there exists an invertible bounded linear or conjugate-linear operator
$T : H \to H$ such that
$
\phi (X) = TXT^\ast
$
for every $X\in (0, \infty)$ \cite{Mol}.

It was much more difficult to describe the general form of automorphisms of $[0,I] = \{ A \in S(H)\, : \, 0 \le A \le I \}$. We first need to introduce a family of bijective monotone increasing functions of the
unit interval onto itself. For every real number $p < 1$ we define such a function $f_p :[0,1] \to [0,1]$ by
$$
f_p (x) = {x \over px + (1-p)}, \ \ \ x \in [0,1].
$$
The following result was proved in \cite{Se0, Se1}.
Let $H$ be a complex Hilbert space,
$\dim H \ge 2$, and $\phi : [0,I] \to [0,I]$ an order automorphism.
Then there exist real numbers $p,q \in (-\infty , 1)$ and a bijective linear or conjugate-linear bounded operator
$T : H \to H$ with $\| T \| \le 1$ such that 
\begin{equation}\label{misteri}
\phi (X) = f_q \left( \left( f_p (TT^* ) \right) ^{-1/2} f_p (TXT^*) \left( f_p (TT^* ) \right) ^{-1/2} \right), \ \ \
X \in [0,I].
\end{equation}
This is quite a mysterious formula. For example, if we take a compositum of two maps of the form (\ref{misteri}), then the new map is again an order automorphism of $[0,I]$, and therefore it must be of the same form. But it seems to be impossible to deduce this fact directly from (\ref{misteri}). 

The final step was to develop the general theory of order isomorphisms of operator intervals, see \cite{Se6}. 

In the mathematical foundations of quantum mechanics the operator interval $[0,I]$ is called the effect algebra on a Hilbert space $H$. It plays the central role in Ludwig's axiomatic formulation of quantum mechanics, see \cite{BLPY}, \cite{Dav}, \cite{Kraus}, \cite{LudI}, and \cite{LudII}. Symmetries of effect algebras, that is, automorphisms of effect algebras were studied a lot, see \cite{DoM}, \cite{GeS}, \cite{HKX}, \cite{Moln1}, \cite{Mo2}, \cite{MoK}, \cite{MoN}, \cite{MoP}, \cite{MoS}, \cite{Se0}, \cite{Se1}, \cite{Se4}, and the references therein. One assumes that $\phi : [0,I] \to [0,I]$ is a bijective map which preservers certain operations and/or relations that are relavant in mathematical foundations of quantum mechanics and then the goal is to describe the general form of such maps. In most cases the description of such maps is very simple. Therefore the converse statement in such cases is trivial and it is easy to deduce that the set of symmetries is a group. This is not the case with the description of order automorphisms of $[0,I]$ given by (\ref{misteri}). 

This difficulty was resolved by the following characterization of order automorphisms of effect algebras \cite[Theorem 7.3]{MS} which is much nicer than the one given by the formula (\ref{misteri}).
Let $H$ be a complex Hilbert space, $\dim H \ge 2$, and $\phi : [0, I] \to [0, I]$ an order automorphism.
Then there exists a bijective linear or conjugate-linear bounded operator
$T: H\to H$ that is unique up to a multiplication with a complex number of modulus one, such that 
\begin{equation}\label{nice}
\phi (X) = \phi_T (X) = T \left( X (T^\ast T -I) +I \right)^{-1} X T^\ast
\end{equation}
for every
$X \in [0,I]$.

Our paper was motivated by the following natural question: does the above theorem holds true also in the real case? Of course, in the real case we need to replace ``linear or conjugate-linear" by ``linear" and ``a complex number of modulus one" by 
``$\pm 1$". Our main result gives the affirmative answer.
If we restrict to the finite-dimensional case then real symmetric $n \times n$ matrices equipped with the partial order $\le$ are at least as important for applications as complex hermitian matrices. Thus, we believe that our question is interesting at least in the finite-dimensional case. This might not be true in the infinite-dimensional case where self-adjoint operators on complex Hilbert spaces  equipped with the partial order $\le$ are of basic importance in mathematical foundations of quantum mechanics while self-adjoint operators on infinite-dimensional real Hilbert spaces seem to be of limited interest. The reason that we will not restrict to the finite-dimensional case is that our approach gives a new much simpler proof of the main theorem also in the complex case.

It should be mentioned that the description of order isomorphisms between two effect algebras has been obtained in a much more general setting of atomic JBW-algebras \cite{RW,VIR}. But again, the description of such maps given in these two papers is far more complicated than (\ref{nice}). The emphasize in this paper is on the elegant formula (\ref{nice}) and a simple proof.

The proof of \cite[Theorem 7.3]{MS}
depends heavily on the fact that the underlying Hilbert space is over the complex field. Let us explain this briefly. Assume that $H$ is a complex Hilbert spce. Let $U \subset S(H)$ be an operator domain, that is, an open connected subset of $S(H)$.
We denote by $\Pi(H)$ the set of all operators $X+iY \in B(H)$, where $X,Y \in S(H)$ and $Y$ is positive and invertible.
The Loewner's theorem for maps on operator domains proved in \cite{MS} states that a map $\phi : U \to S(H)$ is a local order isomorphism if and only if $\phi$ 
 has a unique continuous extension to $U \cup \Pi (H)$ that maps $\Pi (H)$ biholomorphically onto itself. It is possible to use results from infinite dimensional holomorfy \cite{Har} to obtain a nice formula that describes biholomorphic automorphisms of $\Pi (H)$. The above nice description of order automorphisms of $[0,I]$  is a rather simple corollary. Obviously, this method does not work in the real case.

To answer our question we need a new approach. It is based on the projective geometry and works in both the complex and the real case.
At first glance it seems impossible to deduce (\ref{nice}) from the fundamental theorem of projective geometry. Namely, in the conclusion of the fundamental theorem of projective geometry a semilinear map appears while the map $\phi$ in the formula (\ref{nice}) is far from being additive. But if we restrict our attention just to projections then we shall see that $\phi$ in (\ref{nice}) is induced by a linear map in the real case and by a linear or conjugate-linear map in the complex case. Indeed, let $P \in [0,I]$ be any projection. First we need to verify that $P(T^\ast T -I) +I$ is invertible and $\phi (P)$ is self-adjoint. Assume for a moment that this is true (a much stronger statement will be proved later, see the first two paragraphs of the proof of Theorem \ref{reacom}).     Let us verify that $\phi (P)$ is an idempotent. We first claim that for every $A  \in S(H)$, $A > -I$ (note that $A = T^\ast T -I$ satisfies this condition), we have $(P-I)(PA+I)^{-1}P= 0$. All we need to do to verify this claim is to represent all operators as $2\times 2$ operator matrices with respect to the orthogonal direct sum decomposition $H = {\rm Im}\, P \oplus {\rm Ker}\, P$, that is,
$$
P= \left[ \begin{matrix} I & 0 \cr 0 & 0 \cr \end{matrix} \right] , \ \ \  P-I = \left[ \begin{matrix} 0 & 0 \cr 0 & -I \cr \end{matrix} \right] ,
$$  
and
$$
PA  + I = \left[ \begin{matrix} A_1 + I & A_2 \cr 0 & I \cr \end{matrix} \right],
$$
which yields that $(PA+I)^{-1}$ is of the form
$$
(PA+I)^{-1} =  \left[ \begin{matrix} * & * \cr 0 & I \cr \end{matrix} \right]. 
$$
The desired equality $(P-I)(PA+I)^{-1}P= 0$ can now be verified by a straightforward computation. It follows that
$$
\phi (P)^2 =  T \left( P (T^\ast T -I) +I \right)^{-1} (P T^\ast  T) \left( P (T^\ast T -I) +I \right)^{-1} P T^\ast
$$
$$
=  T \left( P T^\ast T -P +I \right)^{-1}( (P T^\ast  T -P +I) +(P-I)) \left( P (T^\ast T -I) +I \right)^{-1} P T^\ast
$$
$$
=  T  \left( P (T^\ast T -I) +I \right)^{-1} P T^\ast = \phi (P),
$$
as desired. Hence, for every projection $P \in [0,I]$ the operator $\phi (P)$ is a projection, too.
Let us next consider the kernel of $\phi (P)$. Let $x \in H$. Since $T$ and $ \left( P (T^\ast T -I) +I \right)^{-1}$ are invertible we have $x \in {\rm Ker}\, \phi (P)$ if and only if $PT^\ast x = 0$. Equivalently, the kernel of $\phi (P)$ is equal to $\left( T^\ast \right)^{-1} ({\rm Ker}\, P)$. Therefore,
\begin{equation}\label{fiesa}
{\rm Im}\, \phi (P) = ({\rm Ker}\, \phi (P))^\perp = \left( \left( T^{-1} \right)^\ast ({\rm Ker}\, P) \right)^\perp = T({\rm Im}\, P).
\end{equation}
Hence, if we restrict the map $\phi$ given by (\ref{nice}) to the set of all projections and if we, as usual, identify projections with their images, then this restriction is induced by the linear (or conjugate-linear) bounded invertible operator $T$. This observation indicates that trying to solve our problem by using the fundamental theorem of projective geometry makes sense. 
There is one limitation of such an approach, that is, the special case when $\dim H = 2$ needs to be treated separately because the assumption that $\dim H \ge 3$ is essential in the fundamental theorem of projective geometry.

In the next section we will present some lemmas that will be needed for the proof of our main theorem. The proof of the main theorem in the case that $\dim H \ge 3$ will be given in the third section and in the fourth section we will treat  the two-dimensional case. The last section will be devoted to order isomorphisms of matrix intervals. The question is which matrix intervals are order isomorphic and in the case when two matrix intervals are order isomorphic we would like to have a description of all order isomorphisms. We will treat only the real case since the complex case has been considered already in \cite{Se6}. Here we will use a completely different approach which will clearly show that our main result is the cruical step in developing the theory of order isomorphisms of matrix or operator intervals.

\section{Preliminary results}

Let $H$ be a real or complex Hilbert space. The following notion is an extension of the definition of the strength of an effect along a ray given in \cite{BG}. Let $A \in S(H)$ be positive and $P: H \to H$ a projection of rank one. Then the strength $\alpha (A,P)$ of $A$ along $P$ is defined as
$$
\alpha (A,P) = \sup \{  t \in \mathbb{R} \,  : \, tP \le A \} = \max \{ t \in \mathbb{R} \, : \, tP \le A \}.
$$
Clearly, $0 \le \alpha (A,P) \le \| A \|$. 

The next lemma and its proof are slight modifications of \cite[Theorem 1]{BG}. Recall that if $Q$ is a rank one projection whose image is spanned by a unit vector $u$ and $z$ is any unit vector, then $\langle Qz,z \rangle = | \langle u,z \rangle |^2$. This is well-known and easy to verify.
\begin{lemma}\label{less}
Let $A,B \in S(H)$ be positive. Then the following two statements are equivalent.
\begin{enumerate}
\item $A \le B$.
\item For every rank one projection $P : H \to H$ we have $\alpha (A,P) \le \alpha (B,P)$.
\end{enumerate}
\end{lemma}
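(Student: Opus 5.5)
The implication (1) $\Rightarrow$ (2) is immediate. If $A \le B$ and $tP \le A$, then $tP \le B$, so the set defining $\alpha(A,P)$ is contained in the set defining $\alpha(B,P)$, and the suprema compare accordingly. The content of the lemma is therefore the converse, which I would prove by contraposition.

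Assume $A \not\le B$. Then there is a unit vector $z$ with $\langle Az,z\rangle > \langle Bz,z\rangle$. The goal is to produce a rank one projection $P$ with $\alpha(A,P) > \alpha(B,P)$. The key tool is an explicit formula for the strength. For a unit vector $u$ and $P$ the projection onto ${\rm span}\{u\}$:
\begin{itemize}
\item if $u \in {\rm Im}\, A^{1/2}$, then $\alpha(A,P) = \| (A^{1/2})^{-1} u\|^{-2}$, where $(A^{1/2})^{-1}u$ denotes the minimal-norm preimage;
\item otherwise $\alpha(A,P) = 0$.
\end{itemize}
I would derive this from the standard fact (Douglas' lemma, valid over $\mathbb{R}$ and $\mathbb{C}$) that $t\, u\otimes u \le A$ holds if and only if $|\langle x,u\rangle|^2 t \le \langle Ax,x\rangle$ for all $x$. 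This in turn is equivalent to $\sqrt t\, u = A^{1/2} w$ for some $w$ with $\|w\|\le 1$.

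Rather than depending on that formula in full generality, a more elementary route (in the spirit of \cite[Theorem 1]{BG}) is to perturb so that things are invertible. Replace $A,B$ by $A+\varepsilon I$ and $B+\varepsilon I$. This keeps $\langle (A+\varepsilon I) z,z\rangle > \langle (B+\varepsilon I)z,z\rangle$. However, the hypothesis (2) must also be transferred, and it is not obvious that $\alpha(A+\varepsilon I,P)\le \alpha(B+\varepsilon I,P)$ follows from $\alpha(A,P)\le\alpha(B,P)$. So I would instead work directly with the formula.

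For invertible positive $C$, the condition $tP\le C$ is equivalent to $t\,C^{-1/2}PC^{-1/2}\le I$, that is, $t\langle C^{-1}u,u\rangle \le 1$. Hence $\alpha(C,P)=\langle C^{-1}u,u\rangle^{-1}$. For general positive $A$, set $\alpha(A,P) = \inf_{x:\langle x,u\rangle\neq 0} \langle Ax,x\rangle/|\langle x,u\rangle|^2$; this follows since $tP\le A$ means exactly $t|\langle x,u\rangle|^2\le\langle Ax,x\rangle$ for all $x$.

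Now take $u=z$. Then $\alpha(B,P)\le \langle Bz,z\rangle$ by testing $x=z$. But $\alpha(A,P)$ could be smaller than $\langle Az,z\rangle$, so choosing $u=z$ need not work. This is the main obstacle: choosing the right direction $u$. The fix, following \cite{BG}, is to choose $u$ so that the infimum for $A$ is attained at $x=z$, namely $u = Az/\|Az\|$. By Cauchy--Schwarz in the semi-inner product $\langle A\cdot,\cdot\rangle$,
\[
|\langle x,Az\rangle|^2 \le \langle Ax,x\rangle\langle Az,z\rangle .
\]
This gives $\alpha(A,P) = \|Az\|^2/\langle Az,z\rangle$. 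Testing the infimum for $B$ at $x=z$ gives
\[
\alpha(B,P) \le \frac{\langle Bz,z\rangle\,\|Az\|^2}{\langle Az,z\rangle^2} < \frac{\|Az\|^2}{\langle Az,z\rangle} = \alpha(A,P).
\]
Here $Az\neq 0$ because $\langle Az,z\rangle>0$. This contradicts (2) and completes the plan.
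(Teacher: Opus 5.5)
Your argument is correct and is essentially the paper's proof: you take the rank one projection onto ${\rm span}\{Az\}$, use Cauchy--Schwarz for the form $\langle A\cdot,\cdot\rangle$ to get $\alpha(A,P)\ge \|Az\|^2/\langle Az,z\rangle$, and test at $x=z$; the only difference is that you phrase it as a contrapositive, whereas the paper argues directly. The detours through Douglas' lemma, the perturbation $A+\varepsilon I$, and the invertible-case formula are unnecessary and can be dropped.
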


\begin{proof}
The implication $(1) \Rightarrow (2)$ is trivial. To prove the converse assume that the second condition is fulfilled and take any unit vector $x \in H$. If $\langle Ax , x \rangle = 0$ then clearly,  $\langle Ax , x \rangle \le  \langle Bx , x \rangle$. So, assume that  $\langle Ax , x \rangle > 0$. Let $Q$ be the rank one projection whose image is spanned by $Ax$ and set 
$$
t = { \| Ax\|^2 \over \langle Ax , x \rangle } .
$$
We will prove that 
\begin{equation}\label{hit}
tQ \le A .
\end{equation}
Assume for a moment that this has been already proved. Then, by our assumption, $tQ \le B$. It follows (see the remark before the formulation of our lemma) that
$$
\langle Ax , x \rangle =  { \| Ax\|^2 \over \langle Ax , x \rangle } \, \left \langle {Ax \over \| Ax \|} , x \right\rangle^2 = t \langle Qx , x \rangle \le \langle Bx , x \rangle ,
$$
as desired.

It remains to prove (\ref{hit}). To this end chose any unit vector $z \in H$. Then
$$
 t \langle Qz , z \rangle =  { \| Ax\|^2 \over \langle Ax , x \rangle } \, \left| \left \langle {Ax \over \| Ax \|} , z \right\rangle \right|^2 
=  { 1 \over \langle Ax , x \rangle } \, \left| \left \langle Ax , z \right\rangle \right|^2 ,
$$
and because $A \ge 0$ and $ \langle Ax , x \rangle >0$, the Cauchy-Schwarz's inequality yields 
$$
 t \langle Qz , z \rangle \le  { 1 \over \langle Ax , x \rangle } \langle Ax , x \rangle \langle Az , z \rangle =  \langle Az , z \rangle .
$$
\end{proof}

As a trivial consequence we see that if $A,B \in S(H)$ are positive, then $A =B$ if and only if for 
 every rank one projection $P : H \to H$ we have $\alpha (A, P) = \alpha (B,P)$. This further implies that if $A \in S(H)$ is positive and $A$ is not the zero operator or a rank one operator, then there exist two different rank one projections $P,Q$ and two real numbers $t,s >0$ such that $tP \le A$ and $sQ \le A$ (of course, in the complex case we see this directly from the spectral theorem for bounded self-adjoint operators).

\begin{lemma}\label{projtwo}
Let $H$ be a real or complex Hilbert space, $\dim H \ge 2$, and $P,Q$ a pair of projections of rank one with $P\not=Q$. Assume that $A \in S(H)$ is an operator such that $P \le A \le I$ and $Q \le A \le I$. We denote by $K$ the two-dimensional subspace of $H$ spanned by the images of $P$ and $Q$. Then the operator matrix representation of $A$ with respect to the orthogonal direct sum decomposition $H = K \oplus K^\perp$ is of the form
$$
A = \left[ \begin{matrix} I_K & 0 \cr 0 & B \cr \end{matrix} \right],
$$
where $I_K$ stands for the identity operator on $K$, and $B :  K^\perp \to K^\perp$ is a positive operator with $B \le I_{K^\perp}$. 
\end{lemma}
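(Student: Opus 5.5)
The approach is to show that $A$ fixes every vector of $K$, i.e. $Ax = x$ for all $x \in K$. Once this is established, $K$ is invariant under $A$, and since $A$ is self-adjoint, $K^\perp$ is invariant as well. The off-diagonal blocks therefore vanish, the $K$-block equals $I_K$, and the $K^\perp$-block $B$ is the compression of $A$ to $K^\perp$. From $0 \le P \le A \le I$ we then get $0 \le B \le I_{K^\perp}$ directly.

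The key observation is that $I - A \ge 0$ and $A - P \ge 0$. Let $u$ be a unit vector spanning ${\rm Im}\, P$. Then
$$
\langle (I-A)u,u\rangle = 1 - \langle Au,u\rangle \le 1 - \langle Pu,u\rangle = 0,
$$
so $\langle (I-A)u,u\rangle = 0$. For a positive operator $C$, the Cauchy–Schwarz inequality for the semi-inner product $(x,y) \mapsto \langle Cx,y\rangle$ shows that $\langle Cu,u\rangle = 0$ forces $Cu = 0$. Applying this with $C = I - A$ gives $Au = u$.

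The same argument applied to a unit vector $v$ spanning ${\rm Im}\, Q$ gives $Av = v$. Since $P \ne Q$, the vectors $u$ and $v$ are linearly independent and span $K$. By linearity, $A$ restricted to $K$ is the identity, and we are done by the first paragraph.

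There is no serious obstacle. The only point needing care is the implication $\langle Cu,u\rangle = 0 \Rightarrow Cu = 0$ for $C \ge 0$. This holds in the real case as well, via $|\langle Cu,y\rangle|^2 \le \langle Cu,u\rangle\langle Cy,y\rangle$, and this inequality is valid for any positive $C$ on a real or complex Hilbert space.
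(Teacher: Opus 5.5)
Your proof is correct and takes the same route as the paper: show that $A$ fixes unit vectors spanning ${\rm Im}\,P$ and ${\rm Im}\,Q$, hence fixes $K$, and then use self-adjointness to kill the off-diagonal blocks. The only cosmetic difference is how you get $Au=u$: you apply Cauchy--Schwarz to the positive form induced by $I-A$ and use $\langle (I-A)u,u\rangle=0$, whereas the paper uses the equality case of the ordinary Cauchy--Schwarz inequality in $1\le\langle Au,u\rangle\le\|Au\|\,\|u\|\le 1$ together with $A\ge 0$.
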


\begin{proof}
Let $x\in H$ be a unit vector that spans the image of $P$. We have
$$
1 = \langle Px , x \rangle \le  \langle Ax , x \rangle \le \| Ax\| \, \|x \| \le 1,
$$
which means that all inequalities are actually equalities and then by the Cauchy-Schwarz theorem we see that $Ax$ and $x$ are linearly depenedent. Because $Ax$ is a unit vector and $A$ is positive we have $Ax=x$. Similarly, $Ay = y$ for a unit vector $y$ that spans the image of $Q$. It follows that $Au=u$ for every $u \in K$. Using the fact that $I-A$ has the operator matrix representation
$$
I - A = \left[ \begin{matrix} 0 & * \cr 0 & * \cr \end{matrix} \right]
$$ 
and $I-A \in S(H)$ we immediately see that
$$
A = \left[ \begin{matrix} I_K & 0 \cr 0 & B \cr \end{matrix} \right],
$$
for some linear self-adjoint bounded operator $B :  K^\perp \to K^\perp$. From $0 \le A \le I$ we conclude that $B$ has the desired properties.
\end{proof}

\begin{lemma}\label{rkone}
Let $H$ be a real or complex Hilbert space, $\dim H \ge 2$, and $A,B \in [0,I]$ with $A \le B$. Then the following are equivalent.
\begin{itemize}
\item There exists a real number $t \ge 0$ and a rank one projection $P$ such that $B = A +tP$.
\item For every pair $C,D \in [0,I]$ satisfying $A \le C,D \le B$ we have $C \le D$ or $D \le C$.
\end{itemize}
\end{lemma}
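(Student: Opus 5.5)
For the implication from the first condition to the second, I would write $B = A + tP$ with $P$ the rank one projection onto the span of a unit vector $u$. If $t=0$ there is nothing to prove. If $t>0$, take $C, D$ with $A \le C, D \le B$. Then $0 \le C - A \le tP$ and $0 \le D-A \le tP$. The key observation is that any positive operator $E$ with $E \le tP$ must be of the form $sP$ with $0 \le s \le t$. Indeed, for $x \perp u$ we get $\langle Ex,x\rangle \le 0$, so $E^{1/2}x = 0$ and hence $Ex=0$. Thus $E$ vanishes on $u^\perp$, and by self-adjointness its image lies in the span of $u$. Consequently $C = A + sP$ and $D = A + rP$ for some scalars $s,r$, and these are comparable according to whether $s\le r$ or $r \le s$.

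For the converse I would argue by contraposition. Assume $B - A =: E \ge 0$ is not of the form $tP$, that is, $E$ is neither zero nor of rank one. By the consequence of Lemma \ref{less} noted after its proof, there exist two different rank one projections $P \neq Q$ and numbers $t,s>0$ with $tP \le E$ and $sQ \le E$. Put $C = A + tP$ and $D = A + sQ$. Both lie between $A$ and $B$, hence in $[0,I]$.

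It remains to check that $C$ and $D$ are incomparable. Suppose $C \le D$. Then $tP \le sQ$, and by the observation in the first part $tP = rQ$ for some $r$. Since $t>0$ this forces $P = Q$, a contradiction. The case $D \le C$ is symmetric. This gives a pair $C, D$ violating the second condition, which completes the contrapositive.

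Neither direction seems to have a real obstacle. The only nontrivial input is the existence of two distinct rank one projections below a positive operator of rank at least two, and the paper has already derived this from Lemma \ref{less}. The small lemma that $0\le E\le tP$ forces $E\in \mathbb{R}P$ is the one technical point, and it works equally in the real and complex cases.
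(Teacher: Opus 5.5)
Your proposal is correct and follows essentially the same route as the paper. In the forward direction you write $C$ and $D$ as $A$ plus multiples of $P$, and in the converse you take $C = A + tP$ and $D = A + sQ$ from the remark after Lemma \ref{less}. The only addition is that you prove the fact that $0 \le E \le tP$ forces $E \in \mathbb{R}P$, which the paper calls ``rather trivial'' and ``clear''.
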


\begin{proof}
Let us first assume that $A,B \in [0,I]$ and $B = A +tP$ for some real number $t \ge 0$ and a rank one projection $P$.
It is rather trivial to verify that if $C,D \in [0,I]$ satisfy $A \le C,D \le B$, then $C = A+ pP$ and $D= A+qP$ for some real numbers $p,q$ belonging to the interval $[0,t]$. It is clear that then $C$ and $D$ are comparable.

To prove the other direction we assume that $\dim {\rm Im}\, (B-A) \ge 2$. Then by the remark following Lemma \ref{less}, we see that  there exist two different rank one projections $P,Q$ and two positive real numbers $p,q$ such that $C = A + pP \le B$ and $D = A+sQ \le B$. Since obviously $A \le C,D \le B$, and neither $C \le D$ nor $D\le C$, the proof of the lemma is completed. 
\end{proof}

\begin{lemma}\label{mikmik}
Let $H$ be a real or complex Hilbert space, $\dim H \ge 2$, $R$ any projection of rank one, and $s$ a real number, $1/2 < s < 1$. Then there exists a pair of orthogonal rank one projections $P,Q$ such that for every real number $p$ we have
$$
pR \le (1/2) P + Q \iff p \le s.
$$
\end{lemma}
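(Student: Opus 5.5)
Let $u$ be a unit vector spanning the image of $R$. The idea is to place $u$ in a two-dimensional subspace spanned by orthonormal vectors $e,f$, with $u = \cos\theta\, e + \sin\theta\, f$, and to let $P$ and $Q$ be the rank one projections onto $e$ and $f$. Put $C = (1/2)P + Q$. We then need the largest $p$ with $pR \le C$, that is, the strength $\alpha(C,R)$, and we want it to equal $s$. Because $pR \le C$ holds trivially for $p \le 0$, the statement reduces to showing that for $p>0$ we have $pR\le C$ exactly when $p \le \alpha(C,R)$. Monotonicity in $p$ is clear, since $R \ge 0$, and the maximum is attained, as was noted in the definition of strength.

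The key computation is the standard formula for the strength of a positive operator along a ray. When $u$ lies in the range of $C$, we have $\alpha(C,R) = 1/\langle C^{+}u,u\rangle$, where $C^{+}$ is the inverse of $C$ on its range. Here $C$ acts on ${\rm span}\{e,f\}$ as ${\rm diag}(1/2,1)$ and is zero on the orthogonal complement, so
$$\alpha(C,R) = \frac{1}{2\cos^2\theta + \sin^2\theta} = \frac{1}{1+\cos^2\theta}.$$
To keep the proof self-contained, I would check this directly. For $x = ae + bf + w$ with $w \perp e,f$, we need
$$p|\langle x,u\rangle|^2 \le \tfrac12 |a|^2 + |b|^2 .$$
Applying Cauchy--Schwarz in the weighted form
$$|a\cos\theta + b\sin\theta|^2 \le \left(\tfrac12|a|^2 + |b|^2\right)\left(2\cos^2\theta + \sin^2\theta\right),$$
with equality attainable, gives exactly this threshold. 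Note that $w$ plays no role, because $\langle w,u\rangle = 0$.

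As $\theta$ ranges over $(0,\pi/2)$, the value $1/(1+\cos^2\theta)$ takes every value in $(1/2,1)$. We therefore choose $\theta$ with $\cos^2\theta = 1/s - 1 \in (0,1)$. This is possible precisely because $1/2 < s < 1$, and it gives $\alpha(C,R) = s$.

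The construction needs the vector $u$ to be given in advance. Since $\dim H \ge 2$, we first pick a unit vector $v \perp u$ and then set $e = \cos\theta\, u + \sin\theta\, v$ and $f = \sin\theta\, u - \cos\theta\, v$. These are orthonormal, and $u = \cos\theta\, e + \sin\theta\, f$. The main obstacle is only the careful verification of the weighted Cauchy--Schwarz step, including that equality is attained (so the bound is sharp). This works equally in the real and complex cases, since we may take all coefficients real and use absolute values throughout.
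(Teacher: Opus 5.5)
Your proof is correct and is essentially the paper's argument. The paper also picks an orthonormal pair spanning a plane that contains ${\rm Im}\,R$, with the squared overlap between $u$ and the vector for $P$ equal to $(1-s)/s$; this is exactly your choice $\cos^2\theta = 1/s-1$. It then checks the threshold by conjugating with ${\rm diag}(\sqrt{2},1)$ and using the trace criterion for rank-one $2\times 2$ matrices, which is the same computation as your weighted Cauchy--Schwarz step.
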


\begin{proof}
The numerical range of $R$ is the closed unit interval $[0,1]$. It follows from $1/2 < s < 1$ that 
$$
0 < { 1- s \over s} < 1.
$$
Hence, we can find a unit vector $x \in H$ such that $\langle Rx , x \rangle = (1-s)/s$. We further find a unit vector $y$ that is orthogonal to $x$ such that ${\rm Im}\, R \subset {\rm span}\, \{ x,y \}$. Then with respect to the orthogonal direct sum decomposition $H = {\rm span} \, \{ x,y \} \oplus \{ x,y \}^\perp$ the rank one projection $R$ has the matrix representation
$$
R =  \left[ \begin{matrix} R_1 & 0 \cr 0 & 0 \cr \end{matrix} \right]
$$
with $R_1$ being a $2\times 2$ rank one projection of the form
$$
R_1 =  \left[ \begin{matrix} {1-s \over s} & * \cr * & 1-  {1-s \over s} \cr \end{matrix} \right] .
$$
It is clear that from now on we can work with $2\times 2$ matrices. Thus $R$ equals the above projection $R_1$ and set
$$
P =  \left[ \begin{matrix} 1 & 0 \cr 0 & 0 \cr \end{matrix} \right] \ \ \ {\rm and} \ \ \ Q = \left[ \begin{matrix} 0 & 0 \cr 0 & 1 \cr \end{matrix} \right] .
$$

Obviously, if $C$ is a $2\times 2$ self-adjoint matrix of rank at most one, then $C \le I$ if and only if ${\rm tr}\, C \le 1$. Hence, for a real number $p$ we have
$$
pR \le (1/2)P + Q =  \left[ \begin{matrix} 1/2 & 0 \cr 0 & 1 \cr \end{matrix} \right]
$$
if and only if
$$
 \left[ \begin{matrix} \sqrt{2}  & 0 \cr 0 & 1 \cr \end{matrix} \right] \, \left( p \left[ \begin{matrix} {1-s \over s} & * \cr * & 1-  {1-s \over s} \cr \end{matrix} \right] \right) \,  \left[ \begin{matrix} \sqrt{2}  & 0 \cr 0 & 1 \cr \end{matrix} \right]
$$
$$
\le  \left[ \begin{matrix} \sqrt{2}  & 0 \cr 0 & 1 \cr \end{matrix} \right] \,              \left[ \begin{matrix} 1/2 & 0 \cr 0 & 1 \cr \end{matrix} \right]          \, \left[ \begin{matrix} \sqrt{2}  & 0 \cr 0 & 1 \cr \end{matrix} \right] ,
$$
which is equivalent to
$$
{\rm tr}\, \left( \left[ \begin{matrix} \sqrt{2}  & 0 \cr 0 & 1 \cr \end{matrix} \right] \, \left( p \left[ \begin{matrix} {1-s \over s} & * \cr * & 1-  {1-s \over s} \cr \end{matrix} \right] \right) \,  \left[ \begin{matrix} \sqrt{2}  & 0 \cr 0 & 1 \cr \end{matrix} \right] \right) \le 1.
$$
Therefore we have $pR \le (1/2)P + Q$ if and only if
$$
p \left( { 2 (1-s) \over s} + 1-  {1-s \over s} \right) \le 1
$$
which happens if and only if $p \le s$.
\end{proof}

The following lemma and its proof are very similar to \cite[Lemma 3.44]{MS1}.

\begin{lemma}\label{chrono}
Let $H$ be a two-dimensional real or complex Hilbert space, $P$ a rank one projection on $H$, and $A \in S(H)$. Then the following are equivalent.
\begin{itemize}
\item There exists a rank one projection $Q$ such that ${\rm tr}\, (PQ)= 1/2$ and $A = (1/3)Q + (I-Q)$. 
\item There exist nonnegative real numbers $t_1 , t_2 , t_3$ and rank one projections $R_1 , R_2 , R_3$ such that
$$
A = (1/2)P + t_1 R_1 , \ \ \ A = (1/2)(I-P) + t_2 R_2 , \ \ \ {\rm and} \ \ \ A = I- t_3 R_3.  
$$ 
\end{itemize}  
\end{lemma}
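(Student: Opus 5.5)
Both directions come down to explicit $2\times 2$ computations, organised by eigenvalues. We fix an orthonormal basis $e_1,e_2$ of $H$ with $P$ the projection onto $e_1$, so $P={\rm diag}(1,0)$ and $I-P={\rm diag}(0,1)$.

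The tool we use throughout is this: for a $2\times 2$ self-adjoint $M$ and a rank one projection $R$, the difference $M-tR$ with $t>0$ has rank at most one exactly when $\det(M-tR)=0$. The second condition therefore says three things. First, $A-(1/2)P\ge 0$ has rank at most one. Second, $A-(1/2)(I-P)\ge0$ has rank at most one. Third, $I-A\ge 0$ has rank at most one; equivalently, $0\le A\le I$ and $1$ is an eigenvalue of $A$.

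\emph{First condition implies second.} The eigenvalues of $A=(1/3)Q+(I-Q)$ are $1/3$ and $1$, so $I-A=(2/3)Q$ and we take $R_3=Q$, $t_3=2/3$. Write $Q=\begin{bmatrix}1/2 & w\\ \bar w & 1/2\end{bmatrix}$ with $|w|=1/2$, using ${\rm tr}(PQ)=1/2$. Then
$$A=I-\tfrac23 Q=\begin{bmatrix}2/3 & -\tfrac23 w\\ -\tfrac23\bar w & 2/3\end{bmatrix}.$$
For $A-(1/2)P$ the diagonal is $(1/6,2/3)$, which is nonnegative, and the determinant is $1/9-(4/9)(1/4)=0$. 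Hence $A-(1/2)P$ is positive of rank one and equals $t_1R_1$. The case $A-(1/2)(I-P)$ is symmetric.

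\emph{Second condition implies first.} From $A=I-t_3R_3$ we know $A$ has eigenvalue $1$. We write $A=I-\lambda Q$ with $Q$ a rank one projection and $\lambda=t_3\ge0$. We cannot have $\lambda=0$: then $I-(1/2)P=A-(1/2)P$ would have rank two, contradicting rank at most one. Put $c={\rm tr}(PQ)\in[0,1]$. Then
$$\det\bigl(A-\tfrac12P\bigr)=\det\bigl(\tfrac12P+(I-P)-\lambda Q\bigr).$$
For $D={\rm diag}(a,b)$ and a rank one $Q$, $\det(D-\lambda Q)=ab-\lambda(a\,q_{22}+b\,q_{11})$, since $\det Q=0$. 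With $a=1/2$, $b=1$, $q_{11}=c$ and $q_{22}=1-c$, this gives
$$\tfrac12-\lambda\left(\tfrac{1-c}{2}+c\right)=\tfrac12-\lambda\,\tfrac{1+c}{2}=0.$$
By the same computation with $I-P$ in place of $P$, the second determinant condition gives $\tfrac12-\lambda\,\tfrac{2-c}{2}=0$. Comparing the two yields $1+c=2-c$, so $c=1/2$, and then $\lambda=2/3$. Thus $A=I-(2/3)Q=(1/3)Q+(I-Q)$ with ${\rm tr}(PQ)=1/2$, as required.

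The positivity of $A-(1/2)P$ is given by hypothesis, so only the determinant conditions are needed in this direction. The one point needing care, and the main obstacle in the argument, is that rank at most one is equivalent to vanishing determinant only for $2\times2$ matrices. This holds here because $\dim H=2$.
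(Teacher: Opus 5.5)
Your proof is correct and follows essentially the same route as the paper: in both directions everything reduces to explicit $2\times 2$ computations in which each rank-at-most-one condition becomes a vanishing determinant. The only differences are cosmetic. In the converse you build $\det(I-A)=0$ into the parametrization $A=I-\lambda Q$ and solve directly for $\lambda$ and $c={\rm tr}\,(PQ)$. The paper instead solves for the entries $a,b,\alpha$ of $A$ and then identifies $Q$.
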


\begin{proof}
We identify operators on $H$ with $2\times 2$ matrices. Assume that $A$ and $P$ satisfy the first condition. Then, after applying an appropriate unitary similarity we may assume that
$$
Q= \left[ \begin{matrix} 1& 0 \cr 0 & 0 \cr \end{matrix} \right] \ \ \ {\rm and} \ \ \ A = \left[ \begin{matrix} 1/3 & 0 \cr 0 & 1 \cr \end{matrix} \right] .
$$
From ${\rm tr}\, (PQ) = 1/2$ we conclude that both diagonal entries of $P$ are equal to $1/2$. From $P\in S(H)$ and $\det P =0$ we see that the off-diagonal entries of $P$ have absolute value $1/2$. After applying yet another unitary similarity we may assume with no loss of generality that
$$
P= \left[ \begin{matrix} 1/2& 1/2 \cr 1/2 & 1/2 \cr \end{matrix} \right] .
$$
Then
$$
A - (1/2)P = \left[ \begin{matrix} 1/12& -1/4 \cr -1/4 & 3/4 \cr \end{matrix} \right]
$$
is obviously a rank one positive matrix. Similarly, both $A - (1/2) (I-P)$ and $I-A$ are positive matrices of rank one. This completes the proof in one direction.

Assume next that the second condition is fulfilled. With no loss of generality we can assume that 
$$
P= \left[ \begin{matrix} 1& 0 \cr 0 & 0 \cr \end{matrix} \right] .
$$
We have
$$
A = \left[ \begin{matrix} a& \alpha \cr \overline{\alpha} & b \cr \end{matrix} \right]
$$
for some real numbers $a$ and $b$ and a real or complex number $\alpha$ (of course, in the real case $\overline{\alpha} = \alpha$). By the second condition we have $\det (A - (1/2)P) = \det (A - (1/2) (I-P))=0$ which gives
$$
(a - 1/2) b = |\alpha |^2 \ \ \ {\rm and} \ \ \ a (b- (1/2)) =  |\alpha |^2 .
$$ 
It follows that $a=b$ and 
\begin{equation}\label{hoja}
a^2 -(1/2)a = |\alpha |^2 .
\end{equation} 
From $\det (I-A) = 0$ we get $(1-a)^2 = |\alpha |^2$ which together with (\ref{hoja}) yields $a=2/3$ and $\alpha = (1/3)z$ where in the complex case $z$ is a complex number of modulus one while in the real case $z \in \{ -1 , 1 \}$. Thus,
$$
A = \left[ \begin{matrix} 2/3 & (1/3) z \cr (1/3)\overline{z} & 2/3 \cr \end{matrix} \right] = (1/3)Q + (I-Q),
$$
where 
$$
Q =  \left[ \begin{matrix} 1/2 & -(1/2) z \cr - (1/2)\overline{z} & 1/2 \cr \end{matrix} \right]
$$
is a rank one projection satisfying ${\rm tr}\, (PQ) = 1/2$.
\end{proof}

In the rest of this section we will restrict our attention to real matrices.  
For any positive integer $n$ we denote by $S_n$ the set of all $n \times n$ real symmetric matrices. Let $[0,I] \subset S_2$ be the set of all $2\times 2$ real effects.
By $\mathcal{D}$ we denote the set of all diagonal effects,
$$
\mathcal{D} = \left\{ \left[ \begin{matrix} s& 0 \cr 0 & t \cr \end{matrix} \right] \, : \, 0 \le s,t \le 1 \right\}.
$$
For every $X \in \mathcal{D}$, $X = {\rm diag}\, (s,t)$, we will denote by $X^\sharp$ the effect
\begin{equation}\label{sharp}
X^\sharp = s  \left[ \begin{matrix} 1/2 & 1/2 \cr 1/2 & 1/2 \cr \end{matrix} \right] + t  \left[ \begin{matrix} 1/2 & -1/2 \cr -1/2 & 1/2 \cr \end{matrix} \right].
\end{equation}

Let $A \in [0,I] \subset S_2$. We set $$
\mathcal{D}_A = \{ X  \in \mathcal{D} \, : \, X \le A \}.
$$
We further denote by $\mathcal{R} \subset [0,I]$ the set of all rank one effects with nonzero off-diagonal entries.

\begin{lemma}\label{max}
Let $A,B \in [0,I] \subset S_2$. The following are equivalent.
\begin{itemize}
\item $\mathcal{D}_A = \mathcal{D}_B$.
\item $A= B$ or $A,B \in \mathcal{R} \cup \{ 0 \}$ or $A = JBJ$,
where
$$
J = \left[ \begin{matrix} 1& 0 \cr 0 & -1 \cr \end{matrix} \right] .
$$
\end{itemize}
\end{lemma}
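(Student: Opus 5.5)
The easy direction goes first. Conjugation by $J$ preserves the order and fixes every diagonal matrix, so $X\le B$ with $X$ diagonal gives $X=JXJ\le JBJ$. Hence $\mathcal{D}_{JBJ}=\mathcal{D}_B$. Next, if $A\in\mathcal{R}\cup\{0\}$ and $X={\rm diag}(s,t)\le A$, then $X\ge 0$ and $A-X\ge 0$. Since $A$ has rank at most one, $X$ must be a nonnegative multiple of $A$, because ${\rm Im}\,X\subset{\rm Im}\,A$. A diagonal multiple of a matrix with nonzero off-diagonal entry is zero, so $\mathcal{D}_A=\{0\}$. This proves that the second condition implies the first.

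For the converse I describe $\mathcal{D}_A$ explicitly. Write
$$
A=\left[\begin{matrix} a & c \cr c & b \cr\end{matrix}\right]
$$
with $c$ real, since we are in the real case. Then ${\rm diag}(s,t)\le A$ means $s\le a$, $t\le b$ and $(a-s)(b-t)\ge c^2$, together with $s,t\ge 0$. So $\mathcal{D}_A$ is the set of points of $[0,\infty)^2$ lying in the region $\{s\le a,\ t\le b,\ (a-s)(b-t)\ge c^2\}$. This region depends on $c$ only through $c^2$, and this is exactly where the ambiguity $A$ versus $JAJ$ comes from, since $JAJ$ has off-diagonal entry $-c$.

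The main step is to recover $a$, $b$ and $c^2$ from this set whenever $\mathcal{D}_A\neq\{0\}$. First observe that $\mathcal{D}_A\neq\{0\}$ forces $a,b$ not both zero and $(a,b,c)$ not of the form with $ab=c^2$, $c\ne0$. Indeed, if $ab=c^2$ with $c\ne 0$, then the only solution is $s=t=0$ by the rank argument above. If $c=0$, then $\mathcal{D}_A=[0,a]\times[0,b]$, and this rectangle determines $a$ and $b$. If $c\neq0$ and $ab>c^2$, the set is the part in the positive quadrant of the region below the hyperbola branch $(a-s)(b-t)=c^2$, which has asymptotes $s=a$, $t=b$. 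Its boundary curve $t=b-c^2/(a-s)$ meets the axes at $s=a-c^2/b$ and $t=b-c^2/a$. A piece of a hyperbola arc determines the whole hyperbola, since it is a real-analytic curve given by a conic, and hence determines $a$, $b$ and $c^2$. More concretely, the boundary function $t(s)=b-c^2/(a-s)$ is known on an interval and its three parameters are determined by three values.

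I expect the main obstacle to be the degenerate boundary cases, where the arc might collapse to a single point. This happens when $b=0$, or $a=0$ with $c\ne0$; but then $A\ge0$ forces $c=0$, so these are covered by the rectangle case. Another case is when the arc meets the quadrant only at one point. The condition $ab>c^2$ with $a,b>0$ gives a genuine nondegenerate arc from $(a-c^2/b,0)$ to $(0,b-c^2/a)$, so this is fine as long as it is handled carefully. Also, $c\neq0$ gives a curved boundary and $c=0$ a rectangle, so the two cases are distinguished by the shape of the set.

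Once $a$, $b$ and $c^2$ coincide for $A$ and $B$, we get $B=A$ or $B=JAJ$. Together with the case $\mathcal{D}_A=\mathcal{D}_B=\{0\}$, which forces $A,B\in\mathcal{R}\cup\{0\}$ because a nonzero $A$ that is not in $\mathcal{R}$ has a nonzero diagonal minorant, this gives the claimed trichotomy.
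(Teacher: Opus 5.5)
Your proof is correct and is essentially the paper's argument. The paper also reduces ${\rm diag}(s,t)\le A$ to $s\le a$, $t\le b$, $(a-s)(b-t)\ge c^2$, and splits into the same three cases: $A\in\mathcal{R}\cup\{0\}$ (where $\mathcal{D}_A=\{0\}$), $A$ diagonal, and $A$ non-diagonal of rank two. It then recovers $a$, $b$, $c^2$ from the hyperbolic arc $(a-s)(b-t)=c^2$, which it calls the set of maximal elements of $\mathcal{D}_A$; this is exactly your upper boundary curve, and the cases are told apart by whether that set is $\{0\}$, a single point, or infinite.

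Your departures are minor. In the rank-one case you use ${\rm Im}\,X\subset{\rm Im}\,A$ instead of the strict determinant inequality. You also make explicit, via the M\"obius/three-values remark, the step the paper covers with ``one can easily conclude''.
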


\begin{proof}
We first note that a $2 \times 2$ symmetric matrix is positive if and only if both diagonal entries are nonengative and its determinant is nonnegative. Indeed, assume first $A \in S_2$ is positive. Let $\{ e_1 , e_2 \}$ be the standard basis of $\mathbb{R}^2$. Then $\langle Ae_1 , e_1 \rangle \ge 0$ and  $\langle Ae_2 , e_2 \rangle \ge 0$ and this tells us that the diagonal entries of $A$ are nonnegative. Since both eigenvalues of $A$ are nonnegative we conclude that $\det A \ge 0$. Assume next that  both diagonal entries of $A$ are nonnegative and $\det A \ge 0$. From the nonnegativity of the determinant we infer that both eigenvalues of $A$ are nonnegative or both are nonpositive and at least one is negative. In the second case the trace of $A$ would be negative contradicting our assumption. Thus, both eigenvalues are nonegative and therefore $A \ge 0$, as desired.

In the next step we will describe the set of maximal elements of $\mathcal{D}_A$ for any $A\in [0,I]$. Let us start with the simplest case that $A \in \mathcal{D}$. Then obviously, the set of maximal elements of $\mathcal{D}_A$ is the singleton $\{ A \}$. Thus we assume from now on that the off-diagonal entries of $A$ are nonzero,
$$
A = \left[ \begin{matrix} t& u \cr u & s \cr \end{matrix} \right] 
$$
with $u \not= 0$. Since $A \ge 0$ we have $ts - u^2 \ge 0$. By the first paragraph of the proof we know that for a pair of real numbers $p,q \in [0,1]$ we have
$$
\left[ \begin{matrix} p& 0 \cr 0 & q \cr \end{matrix} \right] \le \left[ \begin{matrix} t& u \cr u & s \cr \end{matrix} \right]
$$
if and only if $0 \le p \le t$ and $0 \le q \le s$ and 
\begin{equation}\label{detpos}
(t-p)(s-q) - u^2 \ge 0.
\end{equation}
The first case that we will treat is that $A$ is of rank one and has nonzero off-diagonal entries. Then $ts = u^2 \not= 0$. If at least one of the nonnegative numbers $p,q$, $p \le t$, $q \le s$, is positive then
$$
(t-p)(s-q) - u^2 < ts - u^2 = 0 .
$$
It follows that  $\mathcal{D}_A = \{ 0 \}$. 

It remains to consider the case that 
$$A =  \left[ \begin{matrix} t& u \cr u & s \cr \end{matrix} \right]  \in [0,I]
$$ 
is not diagonal and is of rank two. We claim that the set $\mathcal{S}$ of all maximal elements of
$\mathcal{D}_A$ is equal to
$$
\mathcal{S} = \left\{    \left[ \begin{matrix} p& 0 \cr 0 & q \cr \end{matrix} \right] \, : \, 0\le p \le t \ \, {\rm and} \ \, 0 \le q \le s \ \, {\rm and} \ \, (t-p)(s-q) = u^2 
\right\}. 
$$
Let ${\rm diag}\, (p,q) \in \mathcal{S}$. Then clearly ${\rm diag}\, (p,q) \le A$. Take any ${\rm diag}\, (p',q') \in \mathcal{D}$ such that ${\rm diag}\, (p',q') \ge {\rm diag}\, (p,q)$ and ${\rm diag}\, (p',q') \not= {\rm diag}\, (p,q)$. In order to prove that ${\rm diag}\, (p,q)$ is a maximal element of $\mathcal{D}_A$ we need to verify that \begin{equation}\label{mmm}{\rm diag}\, (p',q') \not\le A.\end{equation} If $p' > t$ or $q' > s$ then the verification of (\ref{mmm}) is trivial. So, we may assume that $p' \le t$ and $q' \le s$. We have $p' >p$ or $q' > q$ and therefore $(t-p')(s-q') < (t-p)(s-q) = u^2$ yielding that the determinant of $A - {\rm diag}\, (p',q')$ is negative, and consequently, (\ref{mmm}) holds in this case, as well. Hence, every element of $\mathcal{S}$ is a maximal element of $\mathcal{D}_A$.

Consider now ${\rm diag}\, (p,q) \in \mathcal{D}_A$ such that  ${\rm diag}\, (p,q) \not\in \mathcal{S}$. Then $(t-p)(s-q) > u^2$ yielding that $p <t$ and $q<s$. Then we can find $p'$, $p < p' < t$ such that 
$(t-p')(s-q) > u^2$ which implies that ${\rm diag}\, (p',q) \in \mathcal{D}_A$. Thus, ${\rm diag}\, (p,q)$ is not a maximal element of  $\mathcal{D}_A$. The claim that the set of maximal elements of $\mathcal{D}_A$  coincides with $\mathcal{S}$ has been proved.

We are now ready to prove our equivalence. Assume first that $A$ and $B$ satisfy the second condition. It is clear that if $A=B$ then 
$\mathcal{D}_A = \mathcal{D}_B$. If $A,B \in \mathcal{R} \cup \{ 0 \}$ then 
$\mathcal{D}_A = \mathcal{D}_B = \{ 0 \}$. So, assume that $A= JBJ$. Then obviously $B = JAJ$. If $X \in \mathcal{D}_A$ then $X \le A$ and therefore $X = JXJ  \le JAJ = B$. The proof in one direction is completed.

Hence, assume that $\mathcal{D}_A = \mathcal{D}_B$. We start with the possibility that  $\mathcal{D}_A = \mathcal{D}_B = \{ 0 \}$. Then we know that 
$A,B \in \mathcal{R} \cup \{ 0 \}$. Next, if the set of all maximal elements of $\mathcal{D}_A$ coincides with the set of all maximal elements of $\mathcal{D}_B$ and this set is a singleton $\{ C \}$, $C\not= 0$, then we have already shown that $A=B=C$ is a diagonal matrix. It remains to consider the case when the set of all maximal elements of
$\mathcal{D}_A$, which is equal to  the set of all maximal elements of
$\mathcal{D}_B$, is an infinite set. Then both $A$ and $B$ are invertible matrices with nonzero off-diagonal entries,
$$
A =  \left[ \begin{matrix} t& u \cr u & s \cr \end{matrix} \right] \ \ \ {\rm and} \ \ \ B =  \left[ \begin{matrix} t'& u' \cr u' & s' \cr \end{matrix} \right].
$$
The set of maximal elements of $\mathcal{D}_A$ is the same as the set 
of maximal elements of $\mathcal{D}_B$, that is, the two sets below
$$
\{ (p,q)\, : \, 0\le p \le t \ \, {\rm and} \ \, 0 \le q \le s \ \, {\rm and} \ \, (t-p)(s-q) = u^2  \}$$ and $$
\{ (p,q)\, : \, 0\le p \le t' \ \, {\rm and} \ \, 0 \le q \le s' \ \, {\rm and} \ \, (t'-p)(s'-q) = u'^2  \} 
$$
are the same. From here one can easily conclude that $t=t'$, $s=s'$ and $u = \pm u'$. It follows that $A=B$ or $A = JBJ$.
\end{proof}

\begin{lemma}\label{twocompare}
Let $O$ and $L$ be $2\times 2$ orthogonal matrices. Assume that 
for every pair $X,Y \in \mathcal{D}$ we have
\begin{equation}\label{jeb1}
X\le OYO^t \iff  X \le LYL^t 
\end{equation}
and
\begin{equation}\label{jeb2}
X^\sharp \le OYO^t \iff  X^\sharp \le LYL^t .
\end{equation}
Then $LYL^t = OYO^t$ for every $Y \in \mathcal{D}$.
\end{lemma}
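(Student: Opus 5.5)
The plan is to translate both hypotheses into equalities of the form $\mathcal{D}_A=\mathcal{D}_B$ and then apply Lemma \ref{max} twice: once in the standard basis and once in the basis rotated by $45^\circ$. It suffices to show that the rank one projections $P=Oe_1e_1^tO^t$ and $P'=Le_1e_1^tL^t$ coincide. Indeed, $O\,{\rm diag}(s,t)O^t = sP+t(I-P)$ and similarly for $L$ with $P'$, so $P=P'$ gives $OYO^t=LYL^t$ for every $Y\in\mathcal{D}$.

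\emph{Step 1 (first hypothesis).} Fix $Y={\rm diag}(1,1/2)$ and put $A=OYO^t=(1/2)I+(1/2)P$ and $B=LYL^t=(1/2)I+(1/2)P'$. Condition (\ref{jeb1}) says precisely that $\mathcal{D}_A=\mathcal{D}_B$. Both $A$ and $B$ are invertible, so neither lies in $\mathcal{R}\cup\{0\}$. Lemma \ref{max} then gives $A=B$ or $A=JBJ$, that is, $P'=P$ or $P'=JPJ$. If $P'=P$ we are done, so assume $P'=JPJ\neq P$.

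\emph{Step 2 (second hypothesis).} Let $K$ be the orthogonal matrix whose columns are $(1,1)^t/\sqrt2$ and $(1,-1)^t/\sqrt2$. By (\ref{sharp}) we have $X^\sharp=KXK^t$ for every $X\in\mathcal{D}$. Hence $X^\sharp\le A$ holds if and only if $X\le K^tAK$, and condition (\ref{jeb2}) becomes $\mathcal{D}_{K^tAK}=\mathcal{D}_{K^tBK}$. These two matrices are again invertible, so Lemma \ref{max} yields $K^tAK=K^tBK$ or $K^tAK=JK^tBKJ$. Using the identity $KJK^t=F$, where
$$
F=\left[\begin{matrix}0&1\cr 1&0\cr\end{matrix}\right],
$$
this means $P'=P$ or $P'=FPF$. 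Since $P'\neq P$ by the assumption at the end of Step 1, we obtain $JPJ=FPF$.

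\emph{Step 3 (contradiction).} The equation $JPJ=FPF$ is equivalent to $(FJ)P=P(FJ)$. But $FJ$ is a rotation by $90^\circ$ (up to sign), so it has no real eigenvector. A rank one projection $P$ commuting with $FJ$ would force $FJ$ to leave the line ${\rm Im}\,P$ invariant, which is impossible. This contradiction shows $P'=P$, completing the proof.

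The only step needing genuine care is Step 2: one must check the conjugation identity $KJK^t=F$ and make sure that Lemma \ref{max} applies after changing basis. The latter holds because $K^t[0,I]K=[0,I]$ and the change of basis preserves invertibility, so the exceptional case $\mathcal{R}\cup\{0\}$ does not occur.
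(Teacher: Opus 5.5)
Your proof is correct and follows the paper's route. Both apply Lemma \ref{max} once in the standard basis and once in the $45^\circ$-rotated basis, which gives the two alternatives involving $J$ and $J^\sharp=F$, and both get the contradiction from commutation with the $90^\circ$ rotation $JJ^\sharp$. Your version is slightly streamlined: the single test matrix $Y={\rm diag}(1,1/2)$ and the reduction to $P=Oe_1e_1^tO^t$ remove the paper's case split on whether $L$ is diagonal and its density argument, and the explicit conjugation by $K$ justifies the paper's brief ``apply Lemma \ref{max} with $\mathcal{D}^\sharp$ instead of $\mathcal{D}$''.
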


\begin{proof}
The assumption (\ref{jeb1}) can be rewritten as $\mathcal{D}_{OYO^t} = \mathcal{D}_{LYL^t}$, $Y \in \mathcal{D}$. By Lemma \ref{max} we see that for every $Y \in \mathcal{D}$ of rank two we have either
\begin{equation}\label{mrk1}
OYO^t = LYL^t , \ \ \ {\rm or} \ \ \ OYO^t = JLYL^t J,
\end{equation}
where 
$$
J= \left[ \begin{matrix} 1& 0 \cr 0 & -1 \cr \end{matrix} \right] .
$$ 
Set
$$
J^\sharp =   \left[ \begin{matrix} 1/2 & 1/2 \cr 1/2 & 1/2 \cr \end{matrix} \right] -  \left[ \begin{matrix} 1/2 & -1/2 \cr -1/2 & 1/2 \cr \end{matrix} \right]
$$
and
$$
\mathcal{D}^\sharp = \left \{   s \left[ \begin{matrix} 1/2 & 1/2 \cr 1/2 & 1/2 \cr \end{matrix} \right] +t  \left[ \begin{matrix} 1/2 & -1/2 \cr -1/2 & 1/2 \cr \end{matrix} \right] \, : \, 0 \le s,t \le 1 \right\} .
$$
Applying Lemma \ref{max} once more, this time with $\mathcal{D}^\sharp$ instead of $\mathcal{D}$ and (\ref{jeb2}) instead of (\ref{jeb1}), we conclude 
that for every $Y \in \mathcal{D}$ of rank two we have either
\begin{equation}\label{mrk2}
OYO^t = LYL^t , \ \ \ {\rm or} \ \ \ OYO^t = J^\sharp LYL^t J^\sharp .
\end{equation}
Assume first that $L$ is a diagonal orthogonal matrix. Then $J LYL^t J =  LYL^t$ and by (\ref{mrk1}) we have $OYO^t = LYL^t$ for every $Y \in \mathcal{D}$ of rank two. Since the set of rank two diagonal effects is dense in the set of all diagonal effects we have $OYO^t = LYL^t$ for all $Y \in \mathcal{D}$, as desired. 

Hence, from now on we can assume that $L \not\in \mathcal{D}$. If we have $OYO^t \not= LYL^t$ for some $Y \in \mathcal{D}$ with two different nonzero eigenvalues, then (\ref{mrk1}) and (\ref{mrk2}) imply that
$ JLYL^t J =  J^\sharp LYL^t J^\sharp$, or equivalently, 
$$
LYL^t (J  J^\sharp)= (J J^\sharp) LYL^t .
$$
Hence, $LYL^t$ commutes with
$$
JJ^\sharp =  \left[ \begin{matrix} 0& 1 \cr -1 & 0 \cr \end{matrix} \right] .
$$
A straighforward computation shows that then $LYL^t = sI$ for some real number $s$ contradicting our assumption that $Y$ has two different eigenvalues.

Hence, we have $OYO^t = LYL^t$ for all $Y \in \mathcal{D}$ with two different nonzero eigenvalues, and consequently, 
$OYO^t = LYL^t$ for all $Y \in \mathcal{D}$.
\end{proof}

The proof of the following lemma is easy and is left to the reader.

\begin{lemma}\label{dontr}
Let $s$ be a real number, $1/2 \le s \le 1$, and
$$
A =  \left[ \begin{matrix} s& 0 \cr 0 & 1 \cr \end{matrix} \right] .
$$
Assume that a $2\times 2$ real matrix $P$ is a projection of rank one with the property that for every real number $p > 1/2$ we have $pP \not\le A$. Then $s=1/2$ and
$$
P =  \left[ \begin{matrix} 1& 0 \cr 0 & 0 \cr \end{matrix} \right] .
$$
\end{lemma}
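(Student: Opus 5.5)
Parametrize the rank one projection as $P = vv^t$ with $v = (\cos\theta , \sin\theta)^t$ a unit vector. For a positive definite $A$ and a real $p>0$, we have $pP \le A$ if and only if $p\, v^t A^{-1} v \le 1$. This follows from the remark preceding the formulation of Lemma \ref{mikmik}: a rank at most one matrix $C$ satisfies $C \le I$ if and only if ${\rm tr}\, C \le 1$, and we apply it after conjugating by $A^{-1/2}$. So the maximal $p$ with $pP\le A$ is $\alpha (A,P) = 1/(v^tA^{-1}v)$.

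The hypothesis says exactly that $\alpha (A,P) \le 1/2$. Indeed, if $\alpha(A,P) > 1/2$, then some $p>1/2$ satisfies $pP\le A$, contradicting the assumption. Hence the hypothesis is equivalent to
$$
v^t A^{-1} v = {\cos^2\theta \over s} + \sin^2\theta \ge 2 .
$$
Since $1/2 \le s \le 1$, we have $1/s \le 2$. The left-hand side is a convex combination of $1/s$ and $1$, so it is at most $\max\{1/s,1\} = 1/s \le 2$.

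Equality must therefore hold throughout. This forces $1/s = 2$, that is $s=1/2$. It also forces $\sin^2\theta = 0$, since otherwise the combination would be strictly smaller than $1/s$ because $1 < 1/s = 2$. Thus $v = \pm e_1$ and $P = {\rm diag}\,(1,0)$.

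The only step needing care is the characterization $pP\le A \iff p\,v^tA^{-1}v\le 1$. It follows from $pP \le A \iff p A^{-1/2}vv^tA^{-1/2}\le I$ together with the fact that a positive rank one matrix is $\le I$ exactly when its trace is at most $1$. No real obstacle is expected.
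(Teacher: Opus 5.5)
Your proof is correct and complete. The paper gives no proof of this lemma (it is left to the reader), and your route is the same device the paper uses in the proof of Lemma \ref{mikmik}: conjugate by $A^{-1/2}$ and use that a positive rank-one matrix is $\le I$ exactly when its trace is $\le 1$, which yields $\alpha(A,P)=1/\langle A^{-1}v,v\rangle$. Indeed, for $s=1/2$ the paper's conjugating matrix ${\rm diag}(\sqrt{2},1)$ is precisely $A^{-1/2}$.

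The convex-combination argument then correctly forces $1/s=2$ and $\sin^2\theta=0$. The only nit is the citation: the trace criterion is stated inside the proof of Lemma \ref{mikmik}, not before its formulation. This is harmless, since you re-justify the criterion yourself.
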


\section{Order automorphisms of $[0,I]$}

The symbol $\F$ will stand for either the real field, or the complex field. Let $H$ be a real or complex Hilbert space. A map $T:H \to H$ is called a bounded bijective semilinear operator if it is bijective, continuous, and linear in the real case, and linear or conjugate-linear in the complex case. We should remark that this does not conflict the usual definition of semilinear maps. Indeed, we call a map $T : H \to H$ semilinear if there exists an automorphism $f: \F \to \F$ such that for every $x,y \in H$ and $\lambda \in \F$ we have $T(x+y) = Tx + Ty$ and $T(\lambda x ) = f(\lambda) Tx$. It is well-known that in the real case a map $T$ is semilinear if and only if it is linear. There are exactly two continuous automorphisms of the complex field, that are $f(\lambda) = \lambda$, $\lambda \in \CC$, and $f(\lambda) = \overline{\lambda}$, $\lambda \in \CC$, but there exist many automorphisms of the complex field that are neither the identity nor the complex conjugation. Clearly, if $H$ is a complex Hilbert space and a nonzero map $T : H \to H$ is semilinear with respect to the field automorphism $f$ and $T$ is continuous, then $f$ must be continuous, too.

Let us recall that if $H$ is a complex Hilbert space and $T : H \to H$ a bounded conjugate-linear operator then $T^\ast : H \to H$ is the unique conjugate-linear bounded operator such that $\langle Tx,y \rangle = \overline{ \langle x, T^\ast y \rangle }$, $x,y \in H$. Clearly, if $T : H \to H$ is a bijective bounded conjugate-linear operator and $A \in S(H)$ positive then $T^\ast AT$ is a bounded linear positive  operator. In particular,
$T^\ast T$ is a linear operator and $T^\ast T > 0$. Further, if $H$ is a real or complex Hilbert space and $A,B \in B(H)$, then we know that $AB+I$ is invertible if and only if $BA + I$ is invertible. Indeed, one can easily verify that if $AB+I$ is invertible and $C$ is its inverse, then $I - BCA$ is the inverse of $BA + I$.

\begin{theorem}\label{reacom} 
Let $H$ be a real or complex Hilbert space with $\dim H \ge 3$. Then the map $\phi : [0, I] \to [0, I]$ is an order automorphism if and only if
there exists a bijective semilinear bounded operator
$T: H\to H$ such that
$$
\phi (X) =  \phi_T(X) = T \left( X (T^\ast T -I) +I \right)^{-1} X T^\ast
$$
for every
$X \in [0,I]$.
If $T, S : H \to H$ are invertible bounded semilinear operators and $\phi_T (X) = \phi_S (X)$, $X \in [0,I]$, then $T = cS$ for some $c \in \F$ with $|c | =1$.
\end{theorem}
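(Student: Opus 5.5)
The easy direction (``if'') rests on a closed formula for $\phi_T$. First I check that $\phi_T$ is well defined. By the remark preceding the theorem, $X(T^\ast T-I)+I$ is invertible if and only if $X^{1/2}(T^\ast T-I)X^{1/2}+I$ is. Writing $T^\ast T\ge cI$ with $c>0$, the latter operator equals $X^{1/2}T^\ast TX^{1/2}+(I-X)$, which dominates $cX+(I-X)\ge \min(c,1)I$, so it is invertible. For invertible $X\in[0,I]$ a direct computation gives
$$
\phi_T(X)^{-1}=I-(TT^\ast)^{-1}+(T^{-1})^\ast X^{-1}T^{-1}.
$$
Since $X^{-1}\ge I$, the right-hand side is $\ge I$, so $\phi_T(X)$ is an invertible effect. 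The map is a composition of inversion, a congruence, a translation and inversion again, hence it is order preserving on invertible effects. Plugging the formula into itself yields the group law $\phi_T\circ\phi_S=\phi_{TS}$ and $\phi_I={\rm id}$. Therefore $\phi_T$ is bijective with inverse $\phi_{T^{-1}}$, and $\phi_T$ reflects order because its inverse preserves it. To pass to all effects I use norm continuity of $\phi_T$ together with the approximations $X_\varepsilon=(1-\varepsilon)X+\varepsilon I$. These are invertible, and $X\le Y$ implies $X_\varepsilon\le Y_\varepsilon$, so order preservation and reflection survive the limit.

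For the ``only if'' direction, let $\phi$ be an order automorphism. Then $\phi(0)=0$ and $\phi(I)=I$. By Lemma~\ref{rkone}, pairs $A\le B$ with $B-A$ of rank at most one are characterized order-theoretically, so $\phi$ maps maximal chains $\{tP:0\le t\le 1\}$ issuing from $0$ onto chains of the same kind. Endpoints go to endpoints, so $\phi$ induces a bijection $P\mapsto \phi(P)$ on rank one projections with $\phi(tP)=g_P(t)\phi(P)$, where $g_P$ is an increasing bijection of $[0,1]$. Next I check that collinearity in the projective space is preserved. For distinct $P,Q$, a rank one projection $R$ has image in ${\rm Im}\,P+{\rm Im}\,Q$ if and only if $R\le A$ for every $A\in[0,I]$ with $P,Q\le A$. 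Lemma~\ref{projtwo} gives one implication; for the converse, take $A$ to be the projection onto the span. Since $\dim H\ge 3$, the fundamental theorem of projective geometry provides a bijective semilinear $T$ with $\phi(P)$ equal to the projection onto $T({\rm Im}\,P)$.

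I expect the main obstacle to be showing that $T$ is bounded, so that $\phi_T$ makes sense; in the complex case this also means the field automorphism is continuous. My first attempt would be to extract continuity from monotonicity, using Lemma~\ref{mikmik} to encode metric quantities order-theoretically. The relation $pR\le \frac12P+Q\iff p\le s$, with $P,Q$ orthogonal, links the scale functions $g_R$ to angles between lines, and monotonicity should then force $T$ to map lines close to $P$ to lines close to $\phi(P)$.

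Granting boundedness, I set $\psi=\phi_T^{-1}\circ\phi$. By (\ref{fiesa}), $\psi$ fixes every rank one projection and satisfies $\psi(tP)=h_P(t)P$. Using Lemma~\ref{mikmik} (and Lemma~\ref{chrono} to compare different directions), I will show that $h_P=h$ does not depend on $P$. Comparing with the maps $\phi_{cI}$, $c>0$, which also fix all projections, I will then show that $h$ coincides with the scale function of some $\phi_{cI}$. Replacing $T$ by $cT$, the map $\psi$ fixes every $tP$. Lemma~\ref{less} then gives $\alpha(\psi(A),P)=\alpha(A,P)$ for all $A$ and $P$, hence $\psi={\rm id}$.

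For uniqueness, suppose $\phi_T=\phi_S$. Then $T$ and $S$ induce the same map on lines, so by the uniqueness part of the fundamental theorem $T=cS$ for some $c\in\F$. Evaluating both maps on $tP$ for $0<t<1$ shows that the scale depends on $|c|$ and forces $|c|=1$.
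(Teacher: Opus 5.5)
Your ``if'' direction and your uniqueness argument are fine. Your inverse formula $\phi_T(X)^{-1}=I-(TT^\ast)^{-1}+(T^{-1})^\ast X^{-1}T^{-1}$ is just the paper's $T(T^\ast T-I+X^{-1})^{-1}T^\ast$, and comparing the rank-one scales of $\phi_{cS}$ and $\phi_S$ does force $|c|=1$.

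The gap is in the ``only if'' direction, exactly where you flag it. Applying the fundamental theorem of projective geometry to the collinearity-preserving map on lines yields only an algebraic semilinear bijection $T$, and that data cannot give boundedness. Every linear bijection of $H$, bounded or not, preserves collinearity. In the complex case, already for $\dim H=3$, nothing excludes a discontinuous automorphism of $\mathbb{C}$. Your suggested remedy through Lemma~\ref{mikmik} is not an argument: to exploit $pR\le\frac12P+Q\iff p\le s$ you must know $\phi(\frac12P+Q)$. At that stage you know neither the scale $g_P(\frac12)$ nor whether $\phi(P)\perp\phi(Q)$. The later steps are likewise announced, not proved: that $h_P$ does not depend on $P$ (invoking Lemma~\ref{chrono}, which is a two-dimensional statement), and that $h$ is the M\"obius scale of some $\phi_{cI}$.

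The missing idea is to normalize before using projective geometry. Your own formula gives $\phi_T(\frac12I)=(I+(TT^\ast)^{-1})^{-1}$, which runs over all of $(0,I)$, so you may assume $\phi(\frac12I)=\frac12I$. Then three things follow:
\begin{enumerate}
\item $\phi(\frac12P)=\frac12\phi(P)$ for every rank-one projection $P$.
\item Rank-two projections go to rank-two projections, by Lemma~\ref{projtwo} applied to $\phi$ and $\phi^{-1}$.
\item Lemma~\ref{dontr} gives $\phi(\frac12P+Q)=\frac12\phi(P)+\phi(Q)$, with $\phi(P)\perp\phi(Q)$ whenever $P\perp Q$.
\end{enumerate}
So the line map preserves orthogonality in both directions, and Uhlhorn's theorem supplies a unitary or antiunitary operator. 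Boundedness never has to be proved.

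After composing with that operator, the rest goes through. Lemma~\ref{mikmik} is now usable, since $\phi(\frac12P+Q)=\frac12P+Q$, and gives $\phi(tR)=tR$ for $t\in[\frac12,1]$. The automorphism $\tau(X)=I-\phi(I-X)$ also fixes $\frac12I$ and all rank-one projections, because $\phi(I-P)=I-P$; applying the same step to $\tau$ handles $t\in[0,\frac12]$. Finally, Lemma~\ref{less} gives $\phi=\mathrm{id}$.
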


\begin{proof}
In the first part of the proof we will not use the assumption that $\dim H \ge 3$.
We start by assuming that $T : H \to H$ is a bijective semilinear bounded operator. We first need to show that $X (T^\ast T -I) +I$ is invertible for every $X \in [0,I]$. 
Actually we will prove a little bit more. Since $T^\ast T > 0$ we can find a real number $\delta \in (0,1)$ such that $T^\ast T > \delta I$. Then $\varepsilon = {\delta \over 1 - \delta} > 0$ and we will show 
 that $X (T^\ast T -I) +I$ is invertible for every $X \in [0,(1+\varepsilon)I) = \{ Z \in S(H) \, : \, 0 \le Z < (1+\varepsilon)I \}$.
In what follows we will use the observations formulated in the last paragraph before the formulation of our theorem. So, let $X \in [0,(1+\varepsilon) I)$. We have $T^\ast T - I  > - (1- \delta)I$ which clearly yields
$$
X^{1/2} (  T^\ast T - I ) X^{1/2} \ge  -(1-  \delta) X. 
$$
Because $1 - \delta >0$ and $-X > -(1 + \varepsilon)I$ we further see that
$$
X^{1/2} (  T^\ast T - I ) X^{1/2} > -(1-  \delta) (1 + \varepsilon) I = -I. 
$$
This yields that $X^{1/2} (  T^\ast T - I ) X^{1/2} + I$ is invertible which further implies that $X (T^\ast T -I) +I$ is invertible, as desired.

In order to see that $\phi_T$ is well-defined we also need to verify that  for every $X \in [0 ,I]$ the operator $(X (  T^\ast T - I ) + I)^{-1} X$ is self-adjoint. The short computation below is copied from \cite{MS}. We have
 $$
 (X (  T^\ast T - I )+I)^{-1}X $$ $$
= (X (  T^\ast T - I )+I)^{-1}X( (  T^\ast T - I )X+I)( (  T^\ast T - I )X+I)^{-1}$$
$$= (X (  T^\ast T - I )+I)^{-1}(X (  T^\ast T - I )+I)X( (  T^\ast T - I )X+I)^{-1} $$ $$
=X( (  T^\ast T - I )X+I)^{-1},
$$
as desired.

So far we have shown that $\phi_T$ is a well-defined map from $[0, (1 + \varepsilon) I)$ to $S(H)$. Let us next prove that for any pair $X,Y \in [0,I]$ we have $X \le Y \Rightarrow \phi_T (X) \le \phi_T (Y)$. Let us start with the case that $0 < X \le Y < (1+\varepsilon) I$. It is well-known that from $X \le Y$ it follows that $Y^{-1} \le X^{-1}$ which further yields $T^\ast T - I + Y^{-1} \le T^\ast T - I + X^{-1}$. We have
$$
T^\ast T - I + Y^{-1} > \delta I - I + {1 \over 1+ \varepsilon} I = 0, 
$$
and consequently,   
$$
\left(  T^\ast T - I + X^{-1} \right)^{-1} \le \left(  T^\ast T - I + Y^{-1} \right)^{-1}.
$$
Thus, $ (X (  T^\ast T - I )+I)^{-1}X \le  (Y (  T^\ast T - I )+I)^{-1}Y$ which finally implies $\phi_T(X) \le \phi_T (Y)$. Hence, if $X,Y \in [0,I]$ and $X  \le Y$ then for any positive integer $n$ large enough we have 
$0 < X + (1/n) I \le Y + (1/n) I < (1+\varepsilon) I$ and therefore, $\phi_T(X + (1/n)I) \le \phi_T (Y + (1/n)I)$. By the continuity we have $\phi_T(X) \le \phi_T (Y)$. Since $\phi_T (0) = 0$ and $\phi_T (I) = I$ we conlude that $\phi_T$ maps the effect algebra $[0,I]$ to itself and it preserves order in one direction. It remains to show that $\phi_T : [0,I] \to [0,I]$ is bijective and that $X,Y \in [0,I]$ and $X \le Y$
yield $\phi_T^{-1} (X) \le \phi_T^{-1} (Y)$.

Let $S,R : H \to H$ be bijective bounded semilinear operators. For any $X \in [0,I]$ we have
$$
\phi_S ( \phi_R (X)) = $$ $$S \left[ R ( X(R^\ast R -I) + I)^{-1} XR^\ast (S^\ast S - I) + I \right]^{-1} R [ X(R^\ast R -I) + I]^{-1} XR^\ast S^\ast
$$
$$
=S \left(       ( X(R^\ast R -I) + I) R^{-1} \left[     R ( X(R^\ast R -I) + I)^{-1} XR^\ast (S^\ast S - I) + I \right]     \right)^{-1} XR^\ast S^\ast
$$
$$
= S \left( XR^\ast (S^\ast S - I) + XR^\ast - XR^{-1} + R^{-1} \right)^{-1} XR^\ast S^\ast
 $$
$$
= SR (XR^\ast S^\ast SR - X + I)^{-1} XR^\ast S^\ast = (SR)\left( X ( (SR)^\ast (SR) -I) + I \right)^{-1} X (SR)^\ast
$$
$$
= \phi_{SR} (X).
$$
Because $\phi_I (X) = X$, $X \in [0,I]$, we conclude that $\phi_T : [0,I] \to [0,I]$ is bijective and $\phi_T^{-1} = \phi_{T^{-1}}$. We already know that 
$ \phi_{T^{-1}}$ preserves order. This completes the proof of one direction.

We have
$$
\phi_T ( (1/2)I) = T (I+T^\ast T)^{-1} T^\ast = \left( (T^\ast)^{-1} (I + T^\ast T) T^{-1} \right)^{-1} = (I + S)^{-1},
$$
where $S = (TT^\ast)^{-1}$ runs over all positive invertible linear  operators as $T$ runs over all semilinear bounded invertible operators on $H$. Hence, $\phi_T ( (1/2)I) \in (0,I)$ and for every $C \in (0,I)$ we can find an invertible linear bounded operator $T: H \to H$ such that $\phi_T ( (1/2)I) = C$. It follows easily that for every semilinear bounded invertible operator $T : H \to H$ we have $\phi_T ((0,I)) = (0,I)$.

Assume now that $\phi : [0,I] \to [0,I]$ is an order automorphism.
We already know that there is no loss of generality in assuming that $\phi ((1/2)I) = (1/2)I$. 

By Lemma \ref{rkone}, an operator $A \in [0,I]$ is of rank at most one if and only if
for every pair $C,D \in [0,I]$ satisfying $C,D \le A$ we have $C\le D$ or $D \le C$. This characterization of effects of rank at most one and $\phi(0) = 0$ imply that $\phi$ maps the set of rank one effects onto itself.

Clearly, the set of projections of rank one is the set of maximal elements in the set of all rank one effects with respect to the order $\le$.

Thus, if we denote by $\mathcal{P}_1 \subset [0,I]$ the set of all projections of rank one, then there exists a bijective map $\varphi : \mathcal{P}_1 \to \mathcal{P}_1$ such that $\phi (P) = \varphi (P)$ for every $P \in \mathcal{P}_1$. It follows that the set of effects $A$ satisfying $A \le P$ is mapped onto the set of 
effects $B$ satisfying $B \le \phi(P) = \varphi (P)$, or equivalently, for every $P \in \mathcal{P}_1$ there exists a 
bijective increasing function $f_P : [0,1] \to [0,1]$ such that $\phi (tP) = f_P (t) \varphi (P)$, $0 \le t \le 1$. 
Further, for any real $t$, $0 \le t \le 1$, and any $P \in \mathcal{P}_1$ we have $tP \le (1/2)I$ if and only if $f_P (t) \varphi(P) \le \phi ((1/2)I)= (1/2)I$, which gives us $f_P(1/2) = 1/2$, that is, 
for every $P \in \mathcal{P}_1$ we have $\phi ((1/2)P) = (1/2)\varphi (P)$. 

In the next step we will see that the set of projections of rank two is mapped by $\phi$ bijectively onto the set of projections of rank two and if $R$ is a projection of rank two then $\phi ((1/2)R) = (1/2)\phi (R)$. Take any projection of rank two. It can be written as $P+Q$ where $P,Q$ is an orthogonal pair of projections of rank one. Then $\varphi (P), \varphi (Q) \le \phi (P+Q) \le I$, and clearly, $\varphi(P) \not= \varphi (Q)$.
By Lemma \ref{projtwo} we have
$$
\phi (P+Q) = \left[ \begin{matrix} I_K & 0 \cr 0 & B \cr \end{matrix} \right],
$$
where $I_K$ stands for the identity operator on 
$K = {\rm Im}\, \varphi (P) \oplus {\rm Im}\, \varphi (Q)$ 
(note that at the moment we do not know whether this direct sum is orthogonal), 
and $B :  K^\perp \to K^\perp$ is a positive operator with 
$B \le I_{K^\perp}$. Using the same arguments for the order automorphism $\phi^{-1}$ we see that
$$
\phi^{-1} \left(  \left[ \begin{matrix} I_K & 0 \cr 0 & 0 \cr \end{matrix} \right] \right) =  \left[ \begin{matrix} I_L & 0 \cr 0 & R \cr \end{matrix} \right],
$$
where $L =  {\rm Im}\, P \oplus {\rm Im}\, Q$ and $R$ is a positive operator. Furthermore,
$$
\left[ \begin{matrix} I_L & 0 \cr 0 & R \cr \end{matrix} \right] = \phi^{-1} \left(  \left[ \begin{matrix} I_K & 0 \cr 0 & 0 \cr \end{matrix} \right] \right)  \le \phi^{-1} \left(  \left[ \begin{matrix} I_K & 0 \cr 0 & B \cr \end{matrix} \right] \right) 
= \left[ \begin{matrix} I_L & 0 \cr 0 & 0 \cr \end{matrix} \right],
$$
which yields that $R=0$ and
$$
\phi^{-1} \left(  \left[ \begin{matrix} I_K & 0 \cr 0 & 0 \cr \end{matrix} \right] \right)  = \phi^{-1} \left(  \left[ \begin{matrix} I_K & 0 \cr 0 & B \cr \end{matrix} \right] \right) .
$$
It follows that $B=0$. Thus, $\phi (P+Q)$ is a projection of rank two.

Set $R = P+Q$. Then $(1/2)P , (1/2)Q \le (1/2)R \le (1/2)I$, and consequently, $\phi((1/2)R) \le \phi (R)$ is of rank two and satisfies
$$
(1/2) \varphi (P) , (1 /2) \varphi (Q) \le \phi ((1/2)R) \le (1/2)I .
$$
Using a slight modification of Lemma \ref{projtwo} we see that $ \phi ((1/2)R)$ is a rank two projection onto the two-dimensional subspace spanned by the images of $\varphi (P)$ and $\varphi (Q)$ multiplied by $1/2$. It follows that 
$\phi ((1/2)R) = (1/2)\phi (R)$.

We consider $P,Q \in \mathcal{P}_1$ that are orthogonal and denote $A = (1/2) P + Q$. Then $(1/2) \phi (P+Q) = \phi ((1/2) (P+Q)) \le \phi (A) \le \phi (P+Q)$. Therefore,
$$
\phi (A) = sR + tR' ,
$$
where $1/2 \le s \le t \le 1$, and $R,R'$ is a pair of orthogonal rank one projections such that the image of $R+R'$ is the two-dimensional subspace spanned by the images  of $\varphi (P)$ and $\varphi (Q)$. Note that the possibility that $s=t=1$ cannot occur because we know that $\phi (A)$ is not a projection of rank two.
We further know that $\varphi (Q) \le \phi (A) \le I$ which yields that $t=1$ and $R' = \varphi (Q)$.
It follows from
$p\varphi (P) \not\le \phi (A)$ whenever $p > 1/2$ and Lemma \ref{dontr} that
$\phi(A) = (1/2)\varphi (P) + \varphi (Q)$ and $\varphi (P) \perp \varphi (Q)$.

So far we have not used the assumption that $\dim H \ge 3$ and everything above holds in the case when $\dim H =2$ as well. But in the next step the assumption that $\dim H \ge 3$ is essential.
Uhlhorn's theorem, which is a straightforward consequence of the fundamental theorem of projective geometry, see for example \cite{Fa}), states that if $\varphi : \mathcal{P}_1 \to \mathcal{P}_1$ is a bijective map such that for every pair $P,Q \in \mathcal{P}_1$ we have $P \perp Q \iff \varphi(P) \perp \varphi (Q)$, then in the complex case there exists a unitary or antiunitary operator $U : H \to H$ such that $\varphi (P) = UPU^\ast$, $P \in \mathcal{P}_1$, while in the real case there exists an orthogonal operator $O : H \to H$ such that $\varphi (P) = OPO^\ast$, $P \in \mathcal{P}_1$. 
Thus, we can assume with no loss of generality that $\phi (P) = P$ for every $P \in \mathcal{P}_1$ and 
$\phi ((1/2) P + Q) = (1/2)P+Q$ for every orthogonal pair $P,Q \in \mathcal{P}_1$. 

We claim that 
\begin{equation}\label{oppos}
\phi (I-P) = I-P
\end{equation}
 for every $P \in \mathcal{P}_1$.
Indeed, for every rank one projection $Q$ that is orthogonal to $P$ we have $Q \le I -P$ and therefore $Q \le \phi (I-P)$. This yields $I-P \le \phi (I-P)$ which further implies that $\phi (I-P) = (I-P) + tP$ for some $t \in [0,1]$. For every real $s \in (0,1]$ we have $sP \not\le I-P$ and consequently, for every real $p \in (0,1]$ we have $pP \not\le \phi (I-P)$. This shows that $t=0$, as desired.

Using Lemma \ref{mikmik} we see
that $\phi (tR) = tR$ for every $R \in \mathcal{P}_1$ and every real $t \in [1/2,1]$. 

We introduce a new map $\tau : [0,I] \to [0,I]$ defined by $\tau (X) = I - \phi (I-X)$, $X \in [0,I]$. This is again an order automorphism of $[0,I]$ satisfying $\tau((1/2)I) = (1/2)I$. It follows from (\ref{oppos}) that 
$\tau (P) = P$ for every $P \in \mathcal{P}_1$. But then we know that $\tau (tR) = tR$ for every $R \in \mathcal{P}_1$ and every real $t \in [1/2,1]$. This further implies that $\phi (I -tR) = I -tR$ 
 for every $R \in \mathcal{P}_1$ and every real $t \in [1/2,1]$. 

Let $s \in [0, 1/2]$ and let $R$ be any projection of rank one. Then $t = 1-s \in [1/2 , 1]$. For every real $p \in [0,1]$ we have $p \le s$ if and only if $pR \le I -tR$ which is further equivalent to $f_R (p) R \le I - tR$. Hence, for 
every real $p \in [0,1]$ we have $p \le s$ if and only if $f_R (p) \le 1-t =s$. Thus, $f_R (s) = s$.

We have shown that $\phi (tR) = tR$ for every $R \in \mathcal{P}_1$ and every real $t \in [0,1]$. Using Lemma \ref{less} 
we immediately get that $\phi (X) = X$ for every $X \in [0,I]$.

Finally, suppose that $T, S : H \to H$ are invertible bounded semilinear operators and $\phi_T (X) = \phi_S (X)$, $X \in [0,I]$. We need to show that there exists $c \in \F$ such that $|c| = 1$ and $T=cS$. 
(While proving this fact we will also explain the formula (\ref{nice}).
At first glance this formula that describes the general form of order automorphisms of effect algebras looks quite surprising.  A careful reader will notice that it is related to the fact that the group of order automorphisms of the set of all positive invertible operators coincides with the set of all congruences $X \mapsto TXT^\ast$ where $T$ is an invertible bounded semilinear operator on $H$.) 
We already know $\phi_T ((0,I)) = (0,I)$ and $\phi_S ((0,I)) = (0,I)$. We will use the same symbols $\phi_T$ and $\phi_S$ to denote the restrictions of $\phi_T$ and $\phi_S$ to $(0,I)$, respectively. 
Obviously, the map $\psi : (0, \infty) \to (0,I)$ defined by $\psi (X) = (I+X)^{-1}$, $X \in (0, \infty)$, is an order anti-isomorphism, that is, $\psi$ is bijective and for every pair $X,Y \in (0, \infty)$ we have $X \le Y \iff \psi (Y) \le \psi (X)$.
Its inverse $\psi^{-1} : (0,I) \to (0, \infty)$ is given by $\psi^{-1} (X) = X^{-1} - I$, $X \in (0,I)$. Set $T' = (T^\ast)^{-1}$ and $S' = (S^\ast)^{-1}$ and define order automorphisms $\xi_{T'} , \xi_{S'} : (0, \infty) \to (0, \infty)$ by $\xi_{T'} (X) = T'XT'^\ast$ and $\xi_{S'} (X) = S'XS'^\ast$, $X \in (0,\infty)$. For every $X \in (0,I)$ we have
$$
(\psi \circ \xi_{T'} \circ \psi^{-1}) (X)= \psi \left( (T^\ast)^{-1} (X^{-1} - I) T^{-1} \right) = \left( I +  (T^\ast)^{-1} (X^{-1} - I) T^{-1} \right)^{-1}
$$
$$
= \left(  (T^\ast)^{-1} \left( T^\ast T +  (X^{-1} - I)\right)  T^{-1} \right)^{-1} = T \left( T^\ast T - I + X^{-1}\right)^{-1} T^\ast
$$
$$
= T  \left( X^{-1} \left( X (T^\ast T - I) + I \right) \right)^{-1} T^\ast = T  \left( X (T^\ast T - I) + I  \right)^{-1} XT^\ast = \phi_T (X).
$$
Therefore $\phi_T (X) = \phi_S (X)$, $X \in (0,I)$, yields that $\xi_{T'} (X) = \xi_{S'} (X)$, $X \in (0, \infty)$, or equivalently,
$$
T' X T'^\ast = S' X S'^\ast 
$$
for every $X \in (0, \infty)$. It is very easy to conlude that $T' = cS'$ for some $c \in \F$ with $|c| = 1$. The desired equality $T = cS$ is an easy consequence.
\end{proof}

\section{Two-dimensional case}

In this section we will deal with the two-dimensional case. In the finite-dimen\-sio\-nal case we usually identify operators with matrices. Therefore we will formulate the two theorems below in the language of matrices. 

\begin{theorem}\label{reatwo} 
Let $0$ and $I$ denote the $2 \times 2$ zero matrix and the $2 \times 2$ identity matrix, respectively, and let $[0,I]$ be the matrix interval in $S_2$. Then the map $\phi : [0, I] \to [0, I]$ is an order automorphism if and only if
there exists an invertible $2\times 2$ real matrix $T$
such that
$$
\phi (X) =  T \left( X (T^t T -I) +I \right)^{-1} X T^t
$$
for every
$X \in [0,I]$.
\end{theorem}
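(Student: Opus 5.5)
The ``if'' direction needs no new work. The first part of the proof of Theorem \ref{reacom}, up to the point where $\dim H\ge 3$ is invoked, never uses the dimension assumption. It shows that $\phi_T$ is a well-defined order automorphism of $[0,I]$ for every invertible real $T$, and that $\phi_T((1/2)I)$ runs over all of $(0,I)$. So for the converse I compose $\phi$ with a suitable $\phi_T$ and assume $\phi((1/2)I)=(1/2)I$. Everything proved in Theorem \ref{reacom} before Uhlhorn's theorem is used then applies verbatim for $n=2$. In particular:
\begin{itemize}
\item $\phi$ induces a bijection $\varphi$ of the rank one projections $\mathcal{P}_1$, with $\phi(tP)=f_P(t)\varphi(P)$;
\item $\phi((1/2)P)=(1/2)\varphi(P)$;
\item for $P\perp Q$ we have $\varphi(P)\perp\varphi(Q)$ and $\phi((1/2)P+Q)=(1/2)\varphi(P)+\varphi(Q)$.
\end{itemize}
Since $\dim H=2$, orthogonality only says $\varphi(I-P)=I-\varphi(P)$. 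This is far too weak for Uhlhorn's theorem, so the projective geometry step must be replaced by the special $2\times 2$ lemmas.

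The first step replaces Uhlhorn. Lemma \ref{chrono} characterizes $A=(1/3)Q+(I-Q)$ with ${\rm tr}\,(PQ)=1/2$ purely through order-theoretic data relative to $P$: namely $A=(1/2)P+t_1R_1$, $A=(1/2)(I-P)+t_2R_2$ and $A=I-t_3R_3$. By Lemma \ref{rkone} the relation ``$B-A$ is a nonnegative multiple of a rank one projection'' is preserved by $\phi$, and so are $(1/2)P$ and $(1/2)(I-P)$. Hence $\phi$ maps the two effects $(1/3)Q+(I-Q)$ with ${\rm tr}\,(PQ)=1/2$ onto the corresponding two effects for $\varphi(P)$. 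Consequently $\varphi$ preserves the relation ${\rm tr}\,(PQ)=1/2$. I then compose with a congruence $X\mapsto OXO^t$, $O$ orthogonal, followed if necessary by $X\mapsto JXJ$. This arranges that $\varphi$ fixes ${\rm diag}\,(1,0)$, ${\rm diag}\,(0,1)$ and the two projections appearing in (\ref{sharp}).

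The second step shows that for every orthogonal pair $R,R'=I-R$ and all $s,t\in[0,1]$
$$
\phi(sR+tR')=s\varphi(R)+t\varphi(R').
$$
I do this frame by frame, repeating the argument of Theorem \ref{reacom} inside a single frame. The argument proving (\ref{oppos}) gives $\phi(I-R)=I-\varphi(R)$. Lemma \ref{mikmik} gives $f_R(t)=t$ for $t\in[1/2,1]$; its $P,Q$ form another frame, but only their images $\varphi(P)\perp\varphi(Q)$ and the known value of $\phi((1/2)P+Q)$ are needed. The map $\tau(X)=I-\phi(I-X)$ then gives $f_R(t)=t$ on $[0,1/2]$. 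Finally, $sR+tR'$ is determined within the frame by the order relations with the effects $pR$, $qR'$, $I-pR$ and $I-qR'$ (compare the maximal-element analysis in Lemma \ref{max}). This yields the displayed formula. In particular, after the normalization, $\phi$ is the identity on $\mathcal{D}$ and on $\mathcal{D}^\sharp$.

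The last step, which I expect to be the main obstacle, is to show that $\varphi$ fixes every rank one projection. Fix an arbitrary orthogonal $O$. By the second step, for all $Y\in\mathcal{D}$,
$$
\phi(OYO^t)=LYL^t,
$$
where $L$ is the orthogonal matrix sending the standard basis lines to the images of $\varphi(O\,{\rm diag}(1,0)O^t)$ and $\varphi(O\,{\rm diag}(0,1)O^t)$. Since $\phi$ preserves order in both directions and fixes every $X\in\mathcal{D}$ and every $X^\sharp$, we get $X\le OYO^t\iff X\le LYL^t$ and $X^\sharp\le OYO^t\iff X^\sharp\le LYL^t$. These are exactly the hypotheses (\ref{jeb1}) and (\ref{jeb2}) of Lemma \ref{twocompare}, which gives $LYL^t=OYO^t$. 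Thus $\phi$ fixes every $tR$ with $R\in\mathcal{P}_1$, and Lemma \ref{less} gives $\phi={\rm id}$. Undoing the normalizations expresses $\phi$ as a composition of maps $\phi_S$, and this is again of the form $\phi_T$ by the composition rule $\phi_S\circ\phi_R=\phi_{SR}$. The delicate point is the second step: making the one-frame version of the Theorem \ref{reacom} argument work without a global orthogonality-preserving operator, keeping track of how $\varphi$ acts on the auxiliary frames used in Lemma \ref{mikmik}.
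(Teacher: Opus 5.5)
Your proposal has a genuine gap in Step 2, and the rest of the argument depends on that step. In the proof of Theorem \ref{reacom}, Lemma \ref{mikmik} is applied only \emph{after} Uhlhorn's theorem has normalized $\varphi$ to the identity. Here, applying $\phi$ to $pR\le (1/2)P+Q\iff p\le s$ gives $f_R(p)\varphi(R)\le (1/2)\varphi(P)+\varphi(Q)\iff p\le s$. The computation in the proof of Lemma \ref{mikmik} shows that $u\varphi(R)\le (1/2)\varphi(P)+\varphi(Q)$ holds iff $u\le 1/(1+{\rm tr}(\varphi(R)\varphi(P)))$. So all you obtain is $f_R(s)=1/(1+{\rm tr}(\varphi(R)\varphi(P)))$, and this equals $s$ only if ${\rm tr}(\varphi(R)\varphi(P))={\rm tr}(RP)=(1-s)/s$.

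Knowing $\varphi(P)\perp\varphi(Q)$ and the value of $\phi((1/2)P+Q)$ is therefore not enough, because the relative position of $\varphi(R)$ and $\varphi(P)$ enters. From Step 1 you only know that $\varphi$ preserves transition probabilities $0$ and $1/2$. That yields $f_R(2/3)=2/3$ and nothing more. These two relations do not force $\varphi$ to be an isometry: any bijection of the lines in $\mathbb{R}^2$ commuting with rotation by $\pi/4$ preserves both. That $\varphi$ preserves all transition probabilities is essentially what your Step 3 is meant to deliver, so the argument is circular.

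The missing idea is how to get $f_R=\mathrm{id}$ without knowing $\varphi$. The paper extracts more from Lemma \ref{chrono}. First, $I-Q\le A=(1/3)Q+(I-Q)$ identifies the projection in $\phi(A)=(1/3)Q'+(I-Q')$ as $Q'=\varphi(Q)$. Then $tQ\le A\iff t\le 1/3$ gives $\phi((1/3)Q)=(1/3)\varphi(Q)$, hence $\phi((1/3)I)=(1/3)I$. Via $\tau(X)=I-\phi(I-X)$ one also gets $\phi((2/3)I)=(2/3)I$. Note that $\phi(tI)=tI$ alone forces $f_P(t)=t$ for every $P$, independently of $\varphi$, since $sP\le tI\iff s\le t$.

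Next comes a self-similarity argument. Suppose $\phi$ fixes $aI$, $((a+b)/2)I$ and $bI$. Then its restriction to $[aI,bI]$, conjugated by $X\mapsto (b-a)X+aI$, is an automorphism of $[0,I]$ fixing $(1/2)I$. So $\phi$ also fixes $((2a+b)/3)I$ and $((a+2b)/3)I$. Apply this to $[0,(2/3)I]$, $[(1/3)I,(2/3)I]$ and $[(1/3)I,I]$, and iterate on overlapping intervals of length $(4/9)^r$. This gives a dense set of fixed scalars, hence $\phi(tI)=tI$ and $f_P(t)=t$ for all $t\in[0,1]$ by monotonicity.

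With this in place, your frame-wise formula $\phi(sR+tR')=s\varphi(R)+t\varphi(R')$ goes through. Your final application of Lemma \ref{twocompare}, which is exactly the paper's, then finishes the proof.
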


\begin{theorem}\label{comtwo} 
Let $0$ and $I$ denote the $2 \times 2$ zero matrix and the $2 \times 2$ identity matrix, respectively, and let $[0,I]$ be the matrix interval in $H_2$,  the set of all $2\times 2$ complex hermitian matrices. Then the map $\phi : [0, I] \to [0, I]$ is an order automorphism if and only if
there exists an invertible $2\times 2$ complex matrix $T$
such that either
$$
\phi (X) =   T \left( X (T^\ast T -I) +I \right)^{-1} X T^\ast
$$
for every
$X \in [0,I]$; or
$$
\phi (X) =  T \left( X^{t}  (T^\ast T -I) +I \right)^{-1} X^{t} T^\ast
$$
for every
$X \in [0,I]$.
\end{theorem}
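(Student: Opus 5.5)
The plan is to prove both directions, using the dimension-free part of the proof of Theorem \ref{reacom} and replacing Uhlhorn's theorem by a rigidity argument on the Bloch sphere.

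\emph{Sufficiency.} The first paragraphs of the proof of Theorem \ref{reacom} did not use $\dim H\ge 3$, so every $\phi_T$ is an order automorphism of $[0,I]$ for $\dim H = 2$ as well. Let $C$ denote entrywise complex conjugation on $\CC^2$, which is conjugate-linear. For hermitian $X$ we have $X^t=\overline{X}=CXC$, so $X\mapsto X^t$ preserves $\le$ in both directions. Hence the second family of maps is $\phi_T$ composed with transposition, and it is again an order automorphism. Equivalently, it is $\phi_{TC}$ with the conjugate-linear operator $TC$.

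\emph{Necessity: reduction.} Let $\phi$ be an order automorphism. By the part of the proof of Theorem \ref{reacom} that holds for $\dim H=2$, we may compose with some $\phi_T$ and assume $\phi((1/2)I)=(1/2)I$. That part also yields the following facts.
\begin{itemize}
\item There is a bijection $\varphi$ of $\mathcal P_1$ with $\phi(P)=\varphi(P)$ and $\phi((1/2)P)=(1/2)\varphi(P)$.
\item $\phi$ maps rank-two projections to rank-two projections, so $\phi(I)=I$.
\item $\phi((1/2)P+Q)=(1/2)\varphi(P)+\varphi(Q)$ for orthogonal $P,Q$, and $\varphi(P)\perp\varphi(Q)$.
\end{itemize}
In dimension two, $Q=I-P$, so $\varphi(I-P)=I-\varphi(P)$. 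Orthogonality preservation is therefore vacuous information, and we need more.

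Consider $\tau(X)=I-\phi(I-X)$. Then $\tau(P)=I-\phi(I-P)=I-\varphi(I-P)=\varphi(P)$. Applying the argument giving $\phi(tP)=f_P(t)\varphi(P)$ to $\tau$ yields $\phi(I-tR)=I-g_R(t)\varphi(R)$ for bijective increasing $g_R$. Also $\phi((1/2)(I-P))=(1/2)\varphi(I-P)$.

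Now apply Lemma \ref{chrono}. Its second condition is expressed purely through effects of the forms $(1/2)P+t_1R_1$, $(1/2)(I-P)+t_2R_2$ and $I-t_3R_3$. By Lemma \ref{rkone} and its dual for intervals $[A,I]$, these forms are preserved by $\phi$: the relation $B=A+tR$ is order-theoretic. So $\phi$ maps the set of $A=(1/3)Q+(I-Q)=I-(2/3)Q$ with ${\rm tr}(PQ)=1/2$ onto the corresponding set for $\varphi(P)$. By the formula $\phi(I-tR)=I-g_R(t)\varphi(R)$, the image of $I-(2/3)Q$ is $I-g\,\varphi(Q)$. Hence ${\rm tr}(PQ)=1/2$ implies ${\rm tr}(\varphi(P)\varphi(Q))=1/2$, and applying the same argument to $\phi^{-1}$ gives the converse.

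\emph{Rigidity step (main obstacle).} Identify $\mathcal P_1$ with the unit sphere $S^2\subset\mathbb R^3$ via the Bloch representation $P=\frac12(I+x\cdot\sigma)$. Then ${\rm tr}(PQ)=\frac12(1+x\cdot y)$. So $\varphi$ is a bijection of $S^2$ that preserves antipodality and, in both directions, the relation $x\perp y$. Passing to $\mathbb{RP}^2$, $\varphi$ preserves polarity. A line of $\mathbb{RP}^2$ is exactly the set of points polar to a given point, so lines map onto lines. The fundamental theorem of projective geometry over $\mathbb R$, whose only automorphism is the identity, then gives a linear map $L$ of $\mathbb R^3$. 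Since $L$ preserves orthogonality of all pairs, it is a scalar multiple of an orthogonal matrix, so $\varphi$ is induced by some $O\in O(3)$ (up to the sign ambiguity, which is fixed by antipodality on the sphere). Elements of $SO(3)$ are realized as $P\mapsto UPU^\ast$ with $U$ unitary. Elements of determinant $-1$ are realized as $P\mapsto UP^tU^\ast$, since transposition acts on the Bloch sphere as the reflection $y\mapsto -y$. Composing with the appropriate map (a unitary congruence, or a unitary congruence followed by transposition, which falls within the second family) we may assume $\varphi={\rm id}$.

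\emph{Conclusion.} Once $\varphi={\rm id}$, we argue verbatim as at the end of the proof of Theorem \ref{reacom}.
\begin{itemize}
\item $\phi(I-P)=I-P$.
\item Lemma \ref{mikmik} gives $\phi(tR)=tR$ for $t\in[1/2,1]$.
\item Applying the same to $\tau$ gives $\phi(I-tR)=I-tR$ for $t\in[1/2,1]$, hence $f_R(s)=s$ for $s\in[0,1/2]$.
\item Lemma \ref{less} then gives $\phi={\rm id}$.
\end{itemize}
Undoing the normalizations, and using the composition rule $\phi_S\circ\phi_R=\phi_{SR}$ (together with the observation that transposition conjugates $\phi_T$ into $\phi_{\overline T}$), puts $\phi$ into one of the two stated forms.
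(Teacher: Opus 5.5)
Your route differs from the paper's. For the $2\times 2$ case the paper propagates $\phi(tI)=tI$ and $\phi(tP)=t\varphi(P)$ from $t\in\{1/3,1/2,2/3\}$ to a dense set of $t$, and finishes with Lemmas \ref{max} and \ref{twocompare}. You instead turn the preservation of ${\rm tr}(PQ)=1/2$ into an orthogonality-preserving bijection of the Bloch sphere, apply the fundamental theorem of projective geometry on $\mathbb{RP}^2$, and then reuse the endgame of Theorem \ref{reacom}. That idea uses the complex structure well, but the rigidity step has a genuine gap. The fundamental theorem only controls $\varphi$ on $\mathbb{RP}^2$. On $S^2$ it gives $\varphi(x)=\epsilon(x)Ox$ for some function $\epsilon:S^2\to\{-1,1\}$, and antipodality only yields $\epsilon(-x)=\epsilon(x)$; it does not make $\epsilon$ constant. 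Concretely, take the map on $\mathcal P_1$ that interchanges the diagonal matrix units $E_{11},E_{22}$ and fixes every other rank one projection. It is bijective, satisfies $\varphi(I-P)=I-\varphi(P)$, and satisfies ${\rm tr}(PQ)=1/2\iff{\rm tr}(\varphi(P)\varphi(Q))=1/2$. Yet it is not of the form $P\mapsto UPU^\ast$ or $P\mapsto UP^tU^\ast$, since such maps are continuous and would then fix all of $\mathcal P_1$. So after your normalization you only know $\varphi(P)\in\{P,I-P\}$ for each $P$. Your conclusion uses $\phi(tR)=f_R(t)R$ in the Lemma \ref{mikmik} step, and that is not justified.

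What is missing is an order relation that distinguishes $P$ from $I-P$. The case $Q=I-P$ of your third bullet gives $\phi(I-\frac12P)=I-\frac12\varphi(P)$. Since $(I-\frac12P)^{-1}=I+P$, one computes $\alpha(I-\frac12P,R)=(1+{\rm tr}(PR))^{-1}$. Because $tR\le I-\frac12P\iff f_R(t)\varphi(R)\le I-\frac12\varphi(P)$, this yields $f_R\big((1+{\rm tr}(PR))^{-1}\big)=\big(1+{\rm tr}(\varphi(P)\varphi(R))\big)^{-1}$. Choosing $P$ with ${\rm tr}(PR)=1/2$ gives $f_R(2/3)=2/3$. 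Strict monotonicity of $f_R$ then gives ${\rm tr}(PR)>1/2\iff{\rm tr}(\varphi(P)\varphi(R))>1/2$, that is, $x\cdot r>0\iff\varphi(x)\cdot\varphi(r)>0$. Hence $\epsilon(x)=\epsilon(r)$ whenever $x\cdot r>0$. Any two non-antipodal points are both at an acute angle to their normalized sum, and antipodal points are handled by $\epsilon(-x)=\epsilon(x)$. So $\epsilon$ is constant and can be absorbed into $O$. With this step added, the rest of your argument goes through.
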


Note that for every complex hermitian matrix $X$ we have $X^{t} = \overline{X}$, where $\overline{X}$ is the matrix obtained from $X$ by aplying the complex conjugation entrywise. This explains the appearance of $X^{t}$ in one of the formulae describing order automorphisms of effect algebras in the finite-dimensional case when we use the matrix language rather than speaking of linear or conjugate-linear operators.

Note that the above two theorems are generalizations of the main theorem in \cite{MoP}.

Again we could have proved both theorems simultaneously. But since Theorem \ref{comtwo} is already known, see \cite{MS}, and the proof of Theorem \ref{comtwo} can be easily obtained by just slightly modifying the proof of Theorem \ref{reatwo}, we will give just the proof of the real case.

\begin{proof}[Proof of Theorem \ref{reatwo}]
We need to prove just one direction. So, assume that $\phi : [0, I] \to [0,I]$ is an order automorphism. We already know that there is no loss of generality in assuming that $\phi ((1/2)I) = (1/2)I$. Once we restrict ourseleves to this special case we further know that there exists a bijective map $\varphi : \mathcal{P}_1 \to \mathcal{P}_1$ such that $\phi (P) = \varphi (P)$ and $\phi ((1/2)P) = (1/2)\varphi (P)$, $P \in \mathcal{P}_1$, and for every pair of orthogonal rank one projections $P,Q \in \mathcal{P}_1$ we have $\varphi (P) \perp
\varphi (Q)$ and $\phi ((1/2)P+Q) = (1/2) \varphi (P) + \varphi (Q)$. Moreover,  for every $P \in \mathcal{P}_1$ there exists a 
bijective increasing function $f_P : [0,1] \to [0,1]$ such that $\phi (tP) = f_P (t) \varphi (P)$, $0 \le t \le 1$.

Our next goal is to prove that for every rank one projection $Q$ we have $\phi ((1/3)Q) = (1/3)\varphi (Q)$. Set $A = (1/3)Q + (I-Q)$ and find a rank one projection $P$ such that ${\rm tr}\, (PQ) = 1/2$. Then by Lemma  \ref{chrono} 
there exist nonnegative real numbers $t_1 , t_2 , t_3$ and rank one projections $R_1 , R_2 , R_3$ such that
$$
A = (1/2)P + t_1 R_1 , \ \ \ A = (1/2)(I-P) + t_2 R_2 , \ \ \ {\rm and} \ \ \ A = I- t_3 R_3.  
$$ 
Using Lemma \ref{rkone} three times together with the fact that $\phi : [0, I] \to [0,I]$ is an order automorphism we see that there exist nonnegative real numbers $s_1 , s_2 , s_3$ and rank one projections $R'_1 , R'_2 , R'_3$ such that
$$
\phi (A) = (1/2)\varphi(P) + s_1 R'_1 , \ \ \ \phi (A) = (1/2)(I-\varphi(P)) + s_2 R'_2 , 
$$
and
$$ 
\phi(A) = I- s_3 R'_3.  
$$ 
Applying Lemma \ref{chrono} once more we conclude that
$$
\phi (A) = \phi (  (1/3)Q + (I-Q) ) =  (1/3)Q' + (I-Q')
$$
for some rank one projection $Q'$ satisfying
$$
{\rm tr}\, (\varphi (P) Q') = 1/2.
$$
Because $I-Q \le A$ we have
$$
I - \varphi(Q) \le \phi (A) =  (1/3)Q' + (I-Q').
$$
It follows that $I-Q' = I - \varphi (Q)$, or equivalently, $Q' = \varphi (Q)$. 

We know that there exists a bijective increasing function $k : [0,1] \to [0,1]$ such that $\phi (tQ) = k(t) Q'$. Further, we have
$$
t \le 1/3 \iff tQ \le A \iff k(t) Q' \le (1/3)Q' + (I-Q'),
$$
and consequently, $k((1/3)) = 1/3$, that is, $\phi ((1/3)Q) = (1/3)\varphi (Q)$. 

Because $Q$ is an arbitrary projection of rank one it follows trivially that $\phi ((1/3)I) = (1/3)I$.

Note that we have also shown the following: If $P,Q$ are projections of rank one then  \begin{equation}\label{trhalf} {\rm tr} \, (PQ) = 1/2 \Rightarrow {\rm tr} \, (\varphi(P) \varphi(Q)) = 1/2. \end{equation}

As in the previous section we introduce a new map $\tau : [0,I] \to [0,I]$ defined by $\tau (X) = I - \phi (I-X)$, $X \in [0,I]$. Clearly, $\tau ((1/2)I) = (1/2)I$. By the previous step of the proof we conclude that 
$\tau ((1/3)I) = (1/3)I$, or equivalently, $\phi ((2/3)I) = (2/3)I$. From here we immediately get that  for every rank one projection $Q$ we have $\phi ((2/3)Q) = (2/3)\varphi (Q)$.

Let us summarize what we have obtained so far. We have assumed that $\phi : [0,I] \to [0, I]$ is an order automorphism and $\phi (tI) = tI$ for $t= 0, 1/2, 1$.
It follwows that there exists a bijective map $\varphi : \mathcal{P}_1 \to \mathcal{P}_1$ such that $\varphi (I-P) = I - \varphi (P)$, $P \in \mathcal{P}_1$, and
$$
\phi (tI) = tI \ \ \ {\rm and} \ \ \ \phi (tP) = t \varphi (P)
$$
for every $t=0, 1/3, 1/2, 2/3, 1$ and every $P \in \mathcal{P}_1$.

This further yields that for any pair of real numbers $a,b$, $0 \le a < b \le 1$, the following holds true: 
Assume that $\phi (tI) = tI$ for $t= a, (1/2) (a+b), b$.
Then 
$$
\phi (tI) = tI 
$$
for every $t=a, (2/3)a + (1/3)b, (1/2) (a+b), (1/3)a + (2/3)b, b$, and consequently,
$$
\phi (tP) = t \varphi(P)
$$
for every $t=a, (2/3)a + (1/3)b, (1/2) (a+b), (1/3)a + (2/3)b, b$, and
every $P \in \mathcal{P}_1$.

We apply the above statement to matrix intervals $[0, (2/3)I]$, $[(1/3)I, (2/3)I]$, and $[(1/3)I, I]$ to conclude that
$$
\phi (tI) = tI \ \ \ {\rm and} \ \ \ \phi (tP) = t \varphi (P)
$$
for every $t=0, 2/9, 3/9, 4/9, 5/9, 6/9, 7/9, 1$ and every $P \in \mathcal{P}_1$. 

We have shown that if $\phi : [0,I] \to [0, I]$ is an order automorphism and $\phi (tI) = tI$ for $t= 0, 1/2, 1$, then for $t_1 = 0$,  $t_2 = {2\over 9}$,
 $t_{3} = {4\over 9}$,  $t_{4} = {3 \over 9}$,  $t_{5} = {5\over 9}$,
 $t_{6} = {7\over 9}$,  $t_{7} = {5\over 9}$,  $t_{8} = {7\over 9}$, and
 $t_{9} = 1$ we have
$$
[0,1] = [t_1 , t_3 ] \cup  [t_4 , t_6 ] \cup  [t_7 , t_9 ] ,
$$
$$
t_{3k+3}  - t_{3k+1} = {4 \over 9} \ \ \ {\rm and} \ \ \ t_{3k+2} = { t_{3k+1}  + t_{3k+3} \over 2}, \ \ \ k=0,1,2,
$$
and 
$$
\phi (t_m I) = t_m I \ \ \ {\rm and} \ \ \ \phi (t_m P) = t_m \varphi (P), \ \ \ m = 1, \ldots, 9,
$$
for every $P \in \mathcal{P}_1$. 
 
From here one can easily see that 
for any pair of real numbers $a,b$, $0 \le a < b \le 1$, the following holds true: 
Assume that  $\phi (tI) = tI$ for $t= a, (1/2) (a+b), b$.
Then there exist real numbers $s_1, \ldots, s_9$ such that 
$$
[a,b] = [s_1 , s_3 ] \cup  [s_4 , s_6 ] \cup  [s_7 , s_9 ] ,
$$
$$
s_{3k+3}  - s_{3k+1} = {4 \over 9} (b-a) \ \ \ {\rm and} \ \ \ s_{3k+2} = { s_{3k+1}  + s_{3k+3} \over 2}, \ \ \ k=0,1,2,
$$
and 
$$
\phi (s_m I) = s_m I \ \ \ {\rm and} \ \ \ \phi (s_m P) = s_m \varphi (P), \ \ \ m = 1, \ldots, 9,
$$
for every $P \in \mathcal{P}_1$.

A simple inductive argument shows that for every positive integer $r$ there exist real numbers $t_{r,1}, \ldots , t_{r, 3^{r+1}}$ such that 
$$
[0,1] = \bigcup_{j=0}^{3^r-1} \left[ t_{r, 3j+1} , t_{r, 3j+3}  \right]  ,
$$
$$
t_{r, 3j+3}  - t_{r, 3j+1} = \left({4 \over 9}\right)^r \ \ \ {\rm and} \ \ \ t_{r, 3j+2} = { t_{r, 3j+1}  + t_{r, 3j+3} \over 2}, \ \ \ j=0,1,\ldots , 3^r -1,
$$
and 
$$
\phi (t_{r,m} I) = t_{r,m} I \ \ \ {\rm and} \ \ \ \phi (t_{r,m} P) = t_{r,m} \varphi (P), \ \ \ m = 1, \ldots, 3^{r+1}, 
$$
for every $P \in \mathcal{P}_1$.

The subset $\mathcal{T} \subset [0,1]$ given by
$$
\mathcal{T} = \bigcup_{r=1}^{\infty}      \left\{ t_{r,1}, t_{r,2}, \ldots , t_{r, 3^{r+1}} \right\}
$$
is dense in the unit interval $[0,1]$. Moreover, we have
$$
\phi (tI) = tI \ \ \ {\rm and} \ \ \ \phi (tP) = t \varphi (P)
$$
for every $t \in \mathcal{T}$and every $P \in \mathcal{P}_1$. It follows that for every  $P \in \mathcal{P}_1$ and every $t \in [0,1]$ we have 
$\phi (tI) = tI$ and $\phi (tP) = t \varphi (P)$.

Let $A \in [0,I]$ be any effect. We know that $A = tQ + s(I-Q)$ for some rank one projection $Q$ and some real numbers $s,t \in [0,1]$. We claim that $\phi (A) = t \varphi (Q) + s (I - \varphi (Q))$. We already know that this is true when $t=s$. So, assume that $t\not=s$. Without loss of generality we can assume that $t < s$. We know that for $p \in [0,1]$ we have $pI \le A$ if and only if $p \le t$. Further, $pI \ge A$ if and only if $p \ge s$.  It follows that for every $p \in [0,1]$ we have
$$
pI \le \phi (A) \iff p \le t 
$$
and
$$
pI \ge \phi (A) \iff p \ge s .
$$
From here it is easy to conclude that there exists a rank one projection $Q'$ such that $\phi (A) = tQ' + s(I-Q')$. We further use the fact that for every real number $p$ from the unit interval we have $p\varphi (Q) \le \phi (A)$ if and only if $p \le t$ to show that $\phi (A) = t \varphi (Q) + s (I - \varphi (Q))$, as desired.

After replacing the map $\phi$ by the map $X \mapsto O \phi (X) O^t$, $X \in [0,I]$, where $O$ is an appropriate orthogonal $2\times 2$ matrix, we can assume with no loss of generality that $\phi (X) = X$ for every $X \in \mathcal{D}$.
Recall that the symbol $\mathcal{D}$ stands for the set of all diagonal effects.

It follows from (\ref{trhalf}) that either
$$
\phi \left( \left[ \begin{matrix} 1/2 & 1/2 \cr 1/2 & 1/2 \cr \end{matrix} \right] \right) =  \left[ \begin{matrix} 1/2 & 1/2 \cr 1/2 & 1/2 \cr \end{matrix} \right] ,
$$
or
$$
\phi \left( \left[ \begin{matrix} 1/2 & 1/2 \cr 1/2 & 1/2 \cr \end{matrix} \right] \right) =  \left[ \begin{matrix} 1/2 & -1/2 \cr -1/2 & 1/2 \cr \end{matrix} \right] .
$$
After replacing $\phi$ by the map
$$
X \mapsto \left[ \begin{matrix} 1 & 0 \cr 0 & -1 \cr \end{matrix} \right] \phi (X) 
 \left[ \begin{matrix} 1 & 0 \cr 0 & -1 \cr \end{matrix} \right], \ \ \ X \in [0,I],
$$
if necessary,
we may assume with no loss of generality that we have the first possibility. It follows that for every $X \in \mathcal{D}$ we have $\phi (X^\sharp) = X^\sharp$, where $X^\sharp$ is defined as in (\ref{sharp}).

Moreover, we know that for every $2\times 2$ orthogonal matrix $O$ there exists a $2\times 2$ orthogonal matrix $L$ such that
$$
\phi (OXO^t) = LXL^t
$$
for every $X \in \mathcal{D}$. Choose and fix $O$ and $L$ as above. Then for every pair of diagonal effects $X,Y$ we have
$$
X\le OYO^t \iff \phi (X) = X \le LYL^t
$$
and
$$
X^\sharp\le OYO^t \iff \phi (X^\sharp) = X^\sharp \le LYL^t .
$$
By Lemma \ref{twocompare} we have $
\phi (OYO^t) = OYO^t
$
for every $Y \in \mathcal{D}$. Since $O$ can be chosen to be any $2 \times 2$ orthogonal matrix we have $\phi (X) = X$, $X \in [0,I]$. This completes the proof.  
\end{proof}

In this paper an elementary proof of the description of the group of order autoumorphisms of operator/matrix interval $[0,I]$ has been given. The only nontrivial tool that we have used is the fundamental theorem of projective geometry. If one would like to have a completely self-contained elementary proof then one could start with the elemntary self-contained proof of the two-dimensional case as presented in this section and then deduce the general case in two steps. Let $\phi : [0,I] \to [0,I]$ be an order automorphism. In the first very easy step one would verify that for every projection $P \in [0,I]$ of rank two the effect $\phi (P)$ is a projection of rank two and $\phi$ maps $\{ A \in [0,I] \, : \, PAP = A \} = P[ 0,I]P$ onto $\phi (P) [0,I] \phi (P)$ and the restriction of $\phi$ to $P [0,I]P$ is an order isomorphisms of $P[ 0,I]P$ onto $\phi (P) [0,I] \phi (P)$. This restriction can be considered as an order automorphism of the effect algebra on a two-dimensional Hilbert space and is, by the main result of this section, of a nice form. In the second step one would use this nice behaviour on each of the ``smal pieces" $P[0,I]P$,  $P \in \mathcal{P}_1$, to prove that $\phi$ behaves nicely on the whole effect algebra $[0,I]$.

\section{Order isomorphisms of matrix intervals}

Throughout this section we will assume that $n \ge 2$.
Let $A,B$ be real $n \times n$ symmetric matrices such that $A < B$. Then we define matrix intervals
$$
[A,B] = \{ C \in S_n \, : \, A \le C \le B \},
$$
$$
[A,B) = \{ C \in S_n \, : \, A \le C < B \},
$$
and
$$
(A,B) = \{ C \in S_n \, : \, A < C < B \}.
$$
Similarly, we define
$$
[A, \infty ) = \{ C \in S_n \, : \, C \ge A \},
$$
$$
(A, \infty ) = \{ C \in S_n \, : \, C > A \},
$$
and $S_n = (-\infty , \infty)$. The notations $(A,B]$, $(-\infty, A]$, and $(-\infty , A)$ should now be self-explanatory.

We will first answer the question which of the above matrix intervals are order isomorphic. 
In the next step we will describe  the general form of all order isomorphisms between matrix intervals that are order isomorphic.
It is important to note that if matrix intervals $I_1, I_2, J_1 , J_2$ are order isomorphic and the order isomorphisms $\varphi_j : I_j \to J_j$, $j=1,2$, are given, and we know
the description of the general form
of order isomorphisms between $I_1$ and $I_2$, then we immediately get the general form of order isomorphisms between $J_1$ and $J_2$.
Indeed, each order isomorphism $\psi : J_1 \to J_2$ is of the form 
$$
\psi = \varphi_2 \,  \phi \,  \varphi_{1}^{-1},
$$
where $\phi : I_1 \to I_2$ is any order isomorphism. Of course, a similar reduction is possible if we know that certain matrix intervals are order anti-isomorphic.
We have the following reduction principle: if $I_1$ and $I_2$ are order isomorphic matrix intervals, and 
$J_1$ and $J_2$ are order isomorphic matrix intervals, and $\varphi_ j : I_j \to J_j$, $j=1,2$, are order anti-isomorphisms, then 
 each order isomorphism $\psi : J_1 \to J_2$ is of the form 
$\psi = \varphi_2 \,  \phi \,  \varphi_{1}^{-1}$, where $\phi$ is an order isomorphism of $I_1$ onto $I_2$.

Let $A,B \in S_n$ with $A < B$. Then $\phi : [0,I] \to [A,B]$ given by
$$
\phi (X) = (B - A)^{1/2} X  (B - A)^{1/2} 
+ A, \ \ \ X \in [0,I],
$$
is an order isomorphism. It follows that all matrix intervals of the form $[A,B]$ with $A < B$ are isomorphic and we know how to construct an order isomorphism between two such matrix intervals. Similarly, 
all matrix intervals of the form $[A,B)$ with $A < B$ are isomorphic and again it is easy to give a simple explicit formula of  an order isomorphism between two such matrix intervals.
Analogous statements for the operator intervals $(A,B]$ and $(A,B)$ are obviously true.

Clearly, any two distinct matrix intervals from the collection $[0, I]$, $[0, I)$, $(0, I]$, and $(0,I)$ are order non-isomorphic.
Let us show that $[0, I)$ and $(0,I)$ are not order isomorphic. All we need is to observe
$0 \in [0,I)$ has the property that $0 \le A$ for every $A\in [0, I)$, while there does not exist $B \in (0, I)$
such that $B \le A$ for every $A\in (0,I)$.
  
Further, let $A \in S(H)$. Then the matrix interval $[A, \infty )$ is order isomorphic to $[0, \infty)$ via the translation isomorphism $X \mapsto X-A$.
Similarly, for every $A \in S(H)$ the matrix interval  $(-\infty , A ]$ is order isomorphic to $(- \infty, 0]$, while
clearly, $[0, \infty)$ and $(-\infty , 0]$ are not order isomorphic.

The map $\phi : [0, I) \to [0, \infty)$ given by
$$
\phi (X) = (I -X)^{-1} - I , \ \ \ X \in [0, I),
$$
is an example of an order isomorphism of $[0,I)$ onto $[0, \infty)$ (one just need to check that
that the map $X \mapsto -X$ is an order anti-isomorphism of $[0,I)$ onto $(-I,0]$ and that the map $Z \mapsto Z^{-1}$ is an order anti-isomorphism of $(0,I]$ onto $[I, \infty)$ ).
In an almost the same way we see that $(- \infty, 0]$ is order isomorphic to $(0, I]$.

Further, the map $\phi (X) = I - X^{-1}$  is an order isomorphism of $(0,I)$ onto $(-\infty , 0)$ and
the map $\phi (X) = (I-X)^{-1} - I$ is an order isomorphism of $(0,I)$ onto $(0, \infty)$.

We have shown the following.

\begin{theorem}\label{zadprv}
Every matrix interval $J$ is isomorphic to one of the following matrix intervals: $[0,I]$, $[0, \infty)$, $(-\infty, 0]$, $(0, \infty)$, and $(-\infty , \infty )$.
\end{theorem}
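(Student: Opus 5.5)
The plan is to enumerate all possible matrix intervals and reduce each to one of the five representatives by explicit order isomorphisms, most of which the preceding paragraphs already supply. A matrix interval is determined by its lower end, which is $-\infty$, a closed endpoint $[A$, or an open endpoint $(A$, and its upper end, which is $+\infty$, a closed endpoint $B]$, or an open endpoint $B)$. When both ends are finite we have $A<B$. This gives nine types: $[A,B]$, $[A,B)$, $(A,B]$, $(A,B)$, $[A,\infty)$, $(A,\infty)$, $(-\infty,B]$, $(-\infty,B)$ and $S_n$.

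First I reduce every bounded interval to a unit interval. The affine congruence $X\mapsto (B-A)^{1/2}X(B-A)^{1/2}+A$ maps $[0,I]$, $[0,I)$, $(0,I]$ and $(0,I)$ onto $[A,B]$, $[A,B)$, $(A,B]$ and $(A,B)$, respectively. This works because congruence by an invertible matrix preserves both $\le$ and $<$, so it respects closed and open endpoints alike. Similarly, translation by $A$ or $B$ maps $[A,\infty)$, $(A,\infty)$, $(-\infty,B]$ and $(-\infty,B)$ onto $[0,\infty)$, $(0,\infty)$, $(-\infty,0]$ and $(-\infty,0)$.

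Next I identify the remaining types with the listed representatives using the maps already written down in the text. The map $X\mapsto (I-X)^{-1}-I$ sends $[0,I)$ onto $[0,\infty)$. The map $X\mapsto (I-X)^{-1}-I$ also sends $(0,I)$ onto $(0,\infty)$, and $X\mapsto I-X^{-1}$ sends $(0,I)$ onto $(-\infty,0)$, so the open bounded interval and both open half-lines are all isomorphic to $(0,\infty)$. For $(0,I]$, I use $X\mapsto I-X^{-1}$, which maps $(0,I]$ onto $(-\infty,0]$. For each of these I would verify bijectivity and order preservation, using that $Z\mapsto Z^{-1}$ is an order anti-isomorphism on positive invertible matrices, that $X\mapsto -X$ reverses order, and that both operations map strict inequalities to strict inequalities. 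Finally, $S_n=(-\infty,\infty)$ is itself on the list, and $[0,I]$ covers all closed bounded intervals.

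Collecting the cases, every interval is isomorphic to one of $[0,I]$, $[0,\infty)$, $(-\infty,0]$, $(0,\infty)$ and $S_n$. The only point needing care, and the closest thing to an obstacle, is checking that each inversion-type map respects the boundary correctly. For example, under $X\mapsto I-X^{-1}$ on $(0,I]$, the condition $X\le I$ is equivalent to $X^{-1}\ge I$, hence to $I-X^{-1}\le 0$, while $X>0$ with $X\to 0$ sends $I-X^{-1}$ unboundedly downward, so the image is exactly $(-\infty,0]$. Each such check is routine spectral reasoning, since all the maps are matrix functions of a single matrix.
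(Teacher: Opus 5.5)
Your proposal is correct and is essentially the paper's own argument: you normalize endpoints with the congruence $X\mapsto (B-A)^{1/2}X(B-A)^{1/2}+A$ and translations, then use the maps $X\mapsto (I-X)^{-1}-I$ and $X\mapsto I-X^{-1}$ to identify $[0,I)$, $(0,I]$, $(0,I)$ and the open half-lines with the listed representatives. The one step to tighten is surjectivity. Your remark that $I-X^{-1}$ goes "unboundedly downward" as $X\to 0$ does not prove surjectivity for matrices; instead use the explicit inverses $Y\mapsto I-(I+Y)^{-1}$ and $Y\mapsto (I-Y)^{-1}$.
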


Let $J$ be any matrix interval. Using the above ideas one can easily construct an order isomorphism between $J$ and 
one of the above five matrix intervals.

The matrix intervals $(-\infty, \infty)$ and $(0,\infty)$ are not order isomorphic. This has been proved in the complex case in \cite{Mo3}. Exactly the same proof works also in the real case. It follows that any two of the above five matrix intervals are order non-isomorphic.
Further, $[0, \infty)$     and $(-\infty , 0]$    are obviously order anti-isomorphic.
Hence, to have a full understanding of the structure of all order isomorphisms between any two order isomorphic matrix intervals one only needs
to describe the general form of order automorphisms of the following four 
matrix intervals: $[0,I]$, $[0, \infty)$, $(0, \infty)$, and $(-\infty , \infty )$. The main theorem of our paper characterizes order automorphisms of $[0,I]$. Let us continue with $[0, \infty)$. We will show that every order automorphism $\phi : [0, \infty) \to [0, \infty)$ is a congruence transformation, that is, there exists a real invertible $n \times n$ matrix $T$ such that $\phi (X) = TXT^t$, $X \in [0, \infty)$.

In order to verify this statement we need some rather easy observations. As usual we identify vectors in $\mathbb{R}^n$ with $n \times 1$ matrices. If $x,y \in \mathbb{R}^n$ are nonzero vectors then $xy^t$ is a rank one matrix and every rank one matrix can be written in this form. For nonzero vectors $x,y, u, v \in \mathbb{R}^n$ we have
$$
(xy^t ) (uv^t) = (y^t u) xv^t = \langle y,u \rangle xv^t .
$$
It follows that for $ u, v \in \mathbb{R}^n$ satisfying $\langle u,v \rangle \not= -1$ the matrix $I + uv^t$ is invertible and
$$
(I + uv^t)^{-1} =  I - { 1 \over 1 + \langle u,v \rangle} uv^t .
$$

For every unit vector $x \in \mathbb{R}^n$ the rank one matrix $xx^t$ is a projection. Every rank one symmetric matrix is of the form $txx^t$ for some unit vector $x$ and some real number $t$. Let $\phi : [0,I] \to [0,I]$, $[0,I] \subset S_n$, be an order automorphism. Then we can find an invertible $2 \times 2$ real matrix $T$ such that $\phi (X) = T ( X (T^t T -I) + I)^{-1}XT^t$, $X \in [0,I]$. Denoting $A = A^t = T^t T -I  > -I$ we see that for every unit vector 
$x \in \mathbb{R}^n$ and every real $t$, $0 < t \le 1$, we have $\langle Ax, tx \rangle = t \langle Ax, x \rangle > t  \langle (-I)x, x \rangle = -t \ge -1$, and therefore 
$$
\phi (t xx^t) = tT( txx^t A + I)^{-1} xx^t T^t = tT( (tx)(Ax)^t  + I)^{-1} x(Tx)^t   
$$
$$
= tT \left( I - { 1 \over 1 + t\langle Ax,x \rangle} (tx)(Ax)^t \right)x (Tx)^t = t (Tx)(Tx)^t - { t^2 \langle Ax, x \rangle \over 1 + t \langle Ax, x \rangle } (Tx)(Tx)^t
$$
$$
= { t \over 1 +  t\langle Ax,x \rangle} (Tx)(Tx)^t.
$$
Obviously, $\langle Ax,x \rangle = \| Tx \|^2 - \| x \|^2 = \| Tx \|^2 - 1$. If we denote $y_x = {1 \over \| Tx \|} Tx$, then 
$$
\phi (t xx^t) = { t \| Tx \|^2 \over t(\|Tx \|^2 - 1 ) + 1 } y_x y_{x}^t.
$$
We have three possibilities. The first one is that $A = 0$. Then $T$ is an orthogonal transformation and
$$
\phi (X) = TXT^t, \ \ \ X \in [0,I].
$$
The second one is that there exists a unit vector $x$ such that $\| Tx \| > 1$. For such an $x$ there exists a unit vector $y$ such that for every real $t$, $0 \le t \le 1$, we have
$$
\phi (t xx^t) = f(t) yy^t,
$$
where $f$ is a linear rational function which is strictly increasing on the positive real half-line, $f(0)= 0$, $f(1) = 1$, and $$\lim_{t \to \infty} f(t) < \infty.$$
The third one is that there exists a unit vector $x$ such that $\| Tx \| < 1$. For such an $x$ there exists a unit vector $y$ such that for every real $t$, $0 \le t \le 1$, we have
$$
\phi (t xx^t) = f(t) yy^t,
$$
where $f$ is a linear rational function which is strictly increasing on some interval $[0, a)$ with $a>1$, $f(0)= 0$, and $\lim_{t \uparrow a} f(t) = \infty$.

For every pair $A,B \in S_n$, $A > 0$ and $B > 0$, and every order isomorphism $\phi : [0,A] \to [0,B]$ the map
$$
X \mapsto B^{-1/2}\phi ( A^{1/2} X A^{1 / 2})B^{-1/2}, \ \ \ X \in [0,I],
$$
is an automorphism of $[0,I]$ onto itself. It follows easily that either there exists an invertible $n \times n$ real matrix $S$ such that
$$
\phi (X) = SXS^t, \ \ \ X \in [0,A],
$$
(actually, we know that $S = B^{1/2}OA^{-1/2}$ for some orthogonal matrix $O$ but we do not need this fact for our proof)
or there exist unit vectors $x,y \in \mathbb{R}^n$ and a linear rational function $f$ which is strictly increasing on the positive real half-line, $f(0)= 0$ and $\lim_{t \to \infty} f(t) < \infty$ such that
$$
\phi (t xx^t) = f(t) yy^t
$$
for every real $t$ satisfying $ 0 \le txx^t \le A$, or
there exist unit vectors $x,y \in \mathbb{R}^n$ and a linear rational function $f$ which is strictly increasing on some interval $[0, a)$, $a > \alpha (A, xx^t)$, $f(0)= 0$  and $\lim_{t \uparrow a} f(t) = \infty$ such that
$$
\phi (t xx^t) = f(t) yy^t
$$
for every real $t$ satisfying $ 0 \le txx^t \le A$.

We are now ready to describe the general form of order automorphisms of $[0, \infty) \subset S_n$. So, assume that $\phi : [0, \infty) \to [0, \infty)$ is an order automorphism. Then there exists a positive integer $p$ such that $\phi (pI) = B_p >0$. For every integer $n \ge p$ we have $B_n = \phi (nI)  > 0$ and $\phi$ restricted to $[0, nI]$ is an order isomorphism of $[0, nI]$ onto $[0, B_n]$. 

We first assume that there exist unit vectors $x,y \in \mathbb{R}^n$ and a linear rational function $f$ which is strictly increasing on the positive real half-line, $f(0)= 0$ and $\lim_{t \to \infty} f(t) = m < \infty$ such that
\begin{equation}\label{makalen}
\phi (t xx^t) = f(t) yy^t
\end{equation}
for every real $t$, $0 \le t \le p$. For every positive integer $n > p$ we consider the restriction of $\phi$ to $[0, nI]$ which is an order isomorphism of $[0, nI]$ onto $[0, B_n]$. We have three possibilities for the beahviour of the mapping $t \mapsto \phi (txx^t)$, $0 \le t \le n$, and because we know that (\ref{makalen}) holds for every real $t$, $0 \le t \le p$, we conclude that 
$\phi (t xx^t) = f(t) yy^t$
for every real $t$, $0 \le t \le n$. But $n$ was an arbitrary integer greater than $p$, and thus $\phi (t xx^t) \le mI$ for every positive real $t$. Since $\phi$ is surjective we can find $A \in [0, \infty)$ such that $\phi (A) > mI$. Consequently, $\phi (A) > mI \ge \phi (t xx^t)$ for every nonnegative real $t$ implying that $txx^t \le A$ for every real $t$, $ 0 \le t < \infty$, a contradiction.

The possibility that there exist unit vectors $x,y \in \mathbb{R}^n$ and a linear rational function $f$ which is strictly increasing on some interval $[0, a)$, $a > p$, $f(0)= 0$  and $\lim_{t \uparrow a} f(t) = \infty$ such that
$$
\phi (t xx^t) = f(t) yy^t
$$
for every real $t$, $0 \le t \le p$, leads to a contradicition in a similar way.

It remains to consider the case that $\phi (X) = S_p XS_{p}^t$, $X \in [0, pI]$, for some invertible $n \times n$ real matrix $S_p$. But then it follows trivially that for every integer $n$, $n >p$, there exists an  invertible $n \times n$ real matrix $S_n$ such that $\phi (X) = S_n XS_{n}^t$, $X \in [0, nI]$. In particular, for any pair of integers $m,n >p$ and every unit vector $x$ we have 
$$
S_n xx^t S_{n}^t = S_m xx^t S_{m}^t
$$
which yields that $S_n x = t_x S_m x$ for some real number $t_x$. It can be easily verified that $t_x$ is independent of $x$. Thus, we have $S_n = t_{n,m} S_m$ for some nonzero real number $t_{n,m}$. From $S_n I S_{n}^t = S_m S_{m}^t$ we conclude that $ t_{n,m} = \pm 1$, and  by absorbing the constant we may assume with no loss of generality that $S_n \equiv S$ is independent of positive integer $n$, $n > p$. Hence, $\phi (X) = SXS^{-1}$ for every $X \in [0, \infty)$. Thus, we have the following statement.

\begin{theorem}
Let $\phi : [0, \infty ) \to [0, \infty)$ be an order automorphism. Then there exists a real invertible $n \times n$ matrix $T$ such that $\phi (X) = TXT^t$, $X \in [0, \infty)$.
\end{theorem}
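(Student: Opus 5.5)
The plan is to reduce to the bounded intervals $[0,nI]$, where the main theorem applies, and then rule out the two non-congruence behaviours. First, $\phi$ is an order automorphism of $[0,\infty)$. Fix an integer $p$ with $\phi(pI) > 0$; such a $p$ exists because $\phi$ is surjective, so some $A \ge 0$ satisfies $\phi(A) \ge I$, and then $A \le pI$ for large $p$. For every integer $k \ge p$ put $B_k = \phi(kI) > 0$. Since $\phi$ maps $\{X : X \le kI\}$ onto $\{Y : Y \le B_k\}$, its restriction is an order isomorphism $[0,kI] \to [0,B_k]$. Conjugating by $B_k^{-1/2}$ and $k^{1/2}$ turns this into an automorphism of $[0,I]$. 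By Theorem \ref{reacom} for $n \ge 3$, or Theorem \ref{reatwo} for $n=2$, this automorphism has the form $\phi_T$.

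The explicit computation of $\phi_T(txx^t)$ given before the statement then yields a trichotomy for each fixed $k$. Either the restriction of $\phi$ to $[0,kI]$ is a congruence $X \mapsto S_kXS_k^t$, or there are unit vectors $x,y$ and a linear fractional $f$ with $\phi(txx^t) = f(t)yy^t$ on the admissible range of $t$. In the second case $f$ is either increasing on $[0,\infty)$ with a finite limit, or it blows up at some finite $a$ beyond the admissible range.

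The key step is to exclude the two non-congruence cases at level $p$. Suppose $\phi(txx^t) = f(t)yy^t$ for $0 \le t \le p$ with $f$ bounded by $m$ on $[0,\infty)$. At each level $k > p$ the function $t \mapsto \phi(txx^t)$ on $[0,k]$ is again linear fractional. It agrees with $f$ on $[0,p]$, and linear fractional functions are determined by their values on an interval, so it equals $f$ on $[0,k]$ as well. Hence $\phi(txx^t) \le mI$ for all $t \ge 0$. By surjectivity, pick $A$ with $\phi(A) > mI$. Then $txx^t \le A$ for every $t \ge 0$, which is impossible.

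The blow-up case is handled the same way. Extending to levels $k$ with $k > a$ forces $\phi(txx^t) = f(t)yy^t$ for $t$ up to $k$. But $f$ has a pole at $a < k$, while the image must remain a finite positive matrix for every $t \le k$, which is a contradiction. This analytic-continuation argument is the step I expect to need the most care, specifically in checking that the trichotomy is consistent across levels. The key point is that the rank-one ray $\{txx^t\}$ is mapped to the ray through $yy^t$ with the same $y$ at every level.

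So at level $p$, and by the same argument at every $k \ge p$, $\phi(X) = S_kXS_k^t$ on $[0,kI]$. For $k,l \ge p$ compare on rank-one effects. From $S_kxx^tS_k^t = S_lxx^tS_l^t$ we get $S_kx = \pm S_lx$ for every unit vector $x$. A standard linearity argument applied to $x$, $y$ and $x+y$ shows the sign is constant, so $S_k = \pm S_l$. Replacing $S_k$ by $-S_k$ where needed, we obtain a single $T$ with $\phi(X) = TXT^t$ on every $[0,kI]$, hence on all of $[0,\infty)$.
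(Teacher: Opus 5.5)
Your proposal is correct and follows essentially the same route as the paper. You fix $p$ with $\phi(pI)>0$, apply the main theorem to each restriction $[0,kI]\to[0,B_k]$, and use the rank-one computation to get the congruence/bounded/blow-up trichotomy. You rule out the two non-congruence cases by rational continuation across levels together with surjectivity, and in fact you spell out the blow-up case that the paper only sketches. Finally you glue the matrices $S_k$ together up to a sign $\pm 1$.
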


In the next step we will characterize order automorphisms of $(0, \infty)$. So, let $\phi : (0, \infty ) \to (0, \infty)$ be an order automorphism. We want to show that there exists a real invertible $n \times n$ matrix $T$ such that $\phi (X) = TXT^t$, $X \in (0, \infty)$. Let $A \in (0 , \infty)$. Then the map $\phi_A : [0, \infty) \to [0 , \infty)$ given by
$$
\phi_A (X) = \phi ( X + A) - \phi (A), \ \ \  X \in [0, \infty),
$$
is obviously an order automorphism of $[0, \infty)$, and therefore we can find an invertible $n \times n$ matrix $T_A$ such that
$$
\phi_A (X) = T_A X T_{A}^t , \ \ \  X \in [0, \infty).
$$
For symmetric matrices $A,B$ satisfying $0 < B \le A$ and for every $X \ge 0$ we have
$$
T_A X T_{A}^t + \phi (A) = \phi (X+A) = \phi ((X + A-B) + B) = T_B (X + A - B)T_{B}^t + \phi (B) $$ $$=   T_B X T_{B}^t +  T_B (A - B)T_{B}^t + \phi (B).
$$
Choosing $X=0$ we see that
\begin{equation}\label{backtolj1}
\phi (A) - \phi (B) =   T_B (A - B)T_{B}^t.
\end{equation}
It follows that 
\begin{equation}\label{mrki}
T_A X T_{A}^t  = T_B X T_{B}^t , \ \ \ X \in [0, \infty).
\end{equation}
 For every nonzero vector $x$ the matrix $xx^t$ is positive. From (\ref{mrki}) we get that for every $x \in \mathbb{R}^n$ there exists a real number $t_x$ such that  $T_A x = t_x T_B x$. As before we see that $T_A = t T_B$ for some nonzero real number $t$. Inserting $X = I$ in (\ref{mrki}) we conclude that $\| T_A \| = \| T_B \|$. Hence, $t= \pm 1$, and by absorbing the constant we may assume with no loss of generality that $T_A \equiv T$ is independent of $A$, $A \in (0, \infty)$. Replacing the map $\phi$ by the map $X \mapsto T^{-1} \phi (X) (T^t)^{-1}$, $X \in (0, \infty)$, and using 
(\ref{backtolj1}) we can  assume with no loss of generality that 
$$
\phi (A) - \phi (B) = A-B 
$$
holds true for every pair  $A,B$ satisfying $0 < B \le A$. Applying the fact that $\phi$ is an order automorphism of $(0,I)$ we easily conclude that for any decreasing sequence $(B_n) \subset ( 0, \infty)$ with $\lim B_n = 0$ we have $\lim \phi (B_n) =0$. It follows that $\phi (A) = A$ for every $A \in (0, \infty)$, as desired. Hence, we have the following result.

\begin{theorem}
Let $\phi : (0, \infty ) \to (0, \infty)$ be an order automorphism. Then there exists a real invertible $n \times n$ matrix $T$ such that $\phi (X) = TXT^t$, $X \in (0, \infty)$.
\end{theorem}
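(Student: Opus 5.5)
The plan is to reduce to the previous theorem about $[0,\infty)$ by translation. Fix $A>0$ and define $\phi_A:[0,\infty)\to[0,\infty)$ by
$$
\phi_A(X)=\phi(X+A)-\phi(A).
$$
Since $\phi$ is an order automorphism of $(0,\infty)$, $X\mapsto X+A$ maps $[0,\infty)$ onto $[A,\infty)$, and $\phi$ maps $[A,\infty)$ onto $[\phi(A),\infty)$, the map $\phi_A$ is an order automorphism of $[0,\infty)$. By the preceding theorem $\phi_A(X)=T_AXT_A^t$ for some invertible $T_A$. So on each upper cone $[A,\infty)$ we get
$$
\phi(Y)=T_A(Y-A)T_A^t+\phi(A).
$$

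The next step is to show that $T_A$ can be chosen independent of $A$.

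\begin{itemize}
\item For $0<B\le A$, write $X+A=(X+A-B)+B$. Comparing the two formulas and setting $X=0$ gives $\phi(A)-\phi(B)=T_B(A-B)T_B^t$.
\item Subtracting this from the general identity yields $T_AXT_A^t=T_BXT_B^t$ for all $X\ge 0$.
\item Testing on $X=xx^t$ gives $T_Ax=t_xT_Bx$ for every $x$. A standard linear-algebra argument (apply it to $x$, $y$ and $x+y$) shows $t_x$ is constant, so $T_A=tT_B$.
\item Plugging in $X=I$ forces $t^2=1$, so after absorbing signs $T_A=T_B$.
\item For arbitrary $A,B>0$, compare both with $\min$-type lower bounds: take any $C>0$ with $C\le A$ and $C\le B$, for instance $C=\epsilon I$. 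This gives $T_A=T_C=T_B$.
\end{itemize}

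Hence there is one $T$ with $\phi(A)-\phi(B)=T(A-B)T^t$ whenever $0<B\le A$. Replacing $\phi$ by $X\mapsto T^{-1}\phi(X)(T^t)^{-1}$, which is again an order automorphism of $(0,\infty)$, we get $\phi(A)-\phi(B)=A-B$ for comparable $A,B$. In particular $\phi(A)=A-\epsilon I+\phi(\epsilon I)$ for all $A\ge\epsilon I$. Letting $\epsilon$ vary shows that $C_\epsilon:=\phi(\epsilon I)-\epsilon I$ is a constant matrix $C$, so $\phi(A)=A+C$ on all of $(0,\infty)$.

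The main obstacle is showing $C=0$. Surjectivity onto $(0,\infty)$ gives $A+C>0$ for all $A>0$, so letting $A\to0$ yields $C\ge0$. Now apply surjectivity again: every $Y>0$ equals $A+C$ for some $A>0$, so $Y>C$ for all $Y>0$. Taking $Y=\epsilon I\to0$ gives $C\le 0$, hence $C=0$. (Equivalently, this is a continuity-at-the-bottom argument: decreasing sequences tending to $0$ are mapped to sequences tending to $0$, since $0$ is the infimum in both cones.) Therefore $\phi(X)=TXT^t$ for the original $\phi$.
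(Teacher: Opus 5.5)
Your proof is correct and follows essentially the same route as the paper. You translate to get order automorphisms $\phi_A$ of $[0,\infty)$, use the identity for comparable pairs to show that $T_A$ is independent of $A$ up to sign, normalize to $\phi(A)-\phi(B)=A-B$, and then fix the remaining constant using the behaviour near $0$.

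Your use of the common lower bound $\epsilon I$ for noncomparable $A,B$ spells out a step the paper states only briefly. Your two-sided argument that $C=0$ likewise makes explicit the paper's remark that decreasing sequences tending to $0$ are mapped to sequences tending to $0$.
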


It remains to consider order automorphisms of $(-\infty, \infty )$. Assume that $\phi: S_n \to S_n$ is an order automorphism. Then the map $X \mapsto \phi (X) - \phi (0)$, $X \in S_n$, is an order automorphism that maps $0$ to $0$. Consequently, its restriction to $[0, \infty)$ is an automorphism of $[0, \infty)$ onto itself. It follows that there exists an invertible real matrix $T$ such that $\phi (X) = TXT^\ast + S$, $X \in [0,I)$. Here, $S$ denotes $S= \phi (0)$. In a similar way as in the previous paragarph we can now prove the next theorem.

\begin{theorem}
Let $\phi : S_n \to S_n$ be an order automorphism. Then there exist an invertible $n \times n$ real matrix $T$ and a symmetric $n \times n$ real matrix $S$ such that
$$
\phi (X) = TXT^t + S, \ \ \ X \in S_n.
$$
\end{theorem}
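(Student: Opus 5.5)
We reduce to the case $\phi(0)=0$ and then reuse the description of order automorphisms of $[0,\infty)$. Put $S=\phi(0)$ and $\psi(X)=\phi(X)-S$. Then $\psi$ is an order automorphism of $S_n$ with $\psi(0)=0$. Since $X\ge 0\iff \psi(X)\ge 0$, $\psi$ maps $[0,\infty)$ onto itself. By the theorem on $[0,\infty)$ there is an invertible real $T$ with $\psi(X)=TXT^t$ for all $X\ge 0$. Replacing $\psi$ by $X\mapsto T^{-1}\psi(X)(T^t)^{-1}$, we may assume $\psi(X)=X$ for every $X\ge 0$. The goal is then $\psi=\mathrm{id}$ on $S_n$.

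To handle general $X$, we translate, as in the proof for $(0,\infty)$. For each $A\in S_n$ the map $\psi_A(X)=\psi(X+A)-\psi(A)$ is an order automorphism of $S_n$ fixing $0$. Hence it restricts to an automorphism of $[0,\infty)$, and there is an invertible $T_A$ with
$$
\psi(X+A)=T_AXT_A^t+\psi(A),\qquad X\ge 0 .
$$
Now take $B\le A$ and write $X+A=(X+A-B)+B$. Comparing the two expressions for $\psi(X+A)$ exactly as in the derivation of (\ref{backtolj1}) and (\ref{mrki}) gives
$$
T_AXT_A^t=T_BXT_B^t \quad (X\ge 0)\qquad\text{and}\qquad \psi(A)-\psi(B)=T_B(A-B)T_B^t .
$$
The first identity, tested on rank one matrices $xx^t$ and then on $X=I$, forces $T_A=\pm T_B$.

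Any two matrices $A,A'\in S_n$ have a common lower bound $B$, for example $B=-cI$ with $c$ large. Therefore all the congruences $X\mapsto T_AXT_A^t$ coincide. Taking $A=0$ identifies this common congruence: $\psi$ is the identity on $[0,\infty)$, so $T_0T_0^t=I$ and $T_0xx^tT_0^t=xx^t$ for all $x$, hence $T_0=\pm I$. Consequently $\psi(A)-\psi(B)=A-B$ whenever $B\le A$.

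To finish, take any $A\in S_n$. Choosing $B=-cI\le A$ and also comparing $B$ with $0$ gives
$$
\psi(A)=\psi(B)+A-B \qquad\text{and}\qquad 0=\psi(0)=\psi(B)+0-B .
$$
Hence $\psi(B)=B$ and then $\psi(A)=A$, so $\phi(X)=TXT^t+S$ for all $X\in S_n$. The main obstacle is the bookkeeping that makes $T_A$ independent of $A$ up to sign. This works because $S_n$ is directed downward, which replaces the limiting argument needed in the $(0,\infty)$ case.
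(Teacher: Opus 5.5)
Your proof is correct and follows the paper's route: normalize $\phi(0)=0$, invoke the theorem for $[0,\infty)$, then run the translation argument from the $(0,\infty)$ case, which the paper only indicates with ``in a similar way''. One small correction to your closing remark: $(0,\infty)$ is also downward directed, so directedness is not what removes the limiting argument; the limit becomes unnecessary because $\psi$ is already known at the point $0$ of the domain (indeed on all of $[0,\infty)$).
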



\begin{thebibliography}{99}

\bibitem{BG}P. Busch and S.P. Gudder, Effects as functions of projective Hilbert space, {\em Lett. Math. Phys.} {\bf 47} (1999), 329--337.

\bibitem{BLPY}P. Busch, P. Lahti, J.-P. Pellonp\" a\" a, and K. Ylinen, {\em Quantum measurement}, Springer, 2016.

\bibitem{Dav}E.B. Davies, {\em Quantum theory of open systems}, Academic Press, 1976.

\bibitem{DoM}G. Dolinar and L. Moln\' ar, Sequential endomorphisms of finite-dimensional Hilbert space effect algebras, {\em J. Phys. A} {\bf 45} (2012), 065207, 11 pp.


\bibitem{Fa}C.-A. Faure, An elementary proof of the fundamental theorem of projective
geometry, {\em Geom. Dedicata} {\bf 90} (2002), 145--151.


\bibitem{GeS}G. P. Geh\' er and P. \v Semrl, Coexistency on Hilbert space effect algebras and a characterisation of its symmetry transformations, 
{\em Commun. Math. Phys.} {\bf 379}  (2020), 1077--1112. 


\bibitem{Har}L.A. Harris, Bounded symmetric homogeneous domains in infinite dimensional spaces,  {\em Lecture Notes in Math.} {\bf 364} (1974), 13--40.

\bibitem{HKX}J. Hou, K. He, and X. Qi, Characterizing sequential isomorphisms on Hilbert-space-effect algebras, {\em J. Phys. A} {\bf 43} (2010), 315206, 10pp.


\bibitem{Kraus}K. Kraus, {\em States, Effects, and Operations},  Lecture Notes in Physics \textbf{190}, Springer-Verlag, 1983.


\bibitem{LudI}G. Ludwig, {\em Foundations of Quantum Mechanics, Vol. I}, (Translated from the German by Carl A. Hein), Springer-Verlag, 1983.

\bibitem{LudII}G. Ludwig, {\em Foundations of Quantum Mechanics, Vol. II}, (Translated from the German by Carl A. Hein), Springer-Verlag, 1985.


\bibitem{Moln1}L. Moln\' ar, On some automorphisms of the set of effects on Hilbert space, {\em Lett. Math. Phys.} {\bf 51} (2000), 37--45.


\bibitem{Mo1}L. Moln\' ar,  Order-automorphisms of the set of bounded observables, {\em J. Math. Phys.} {\bf 42} (2001), 5904 - 5909. 

\bibitem{Mo2}L. Moln\' ar,  Characterizations of the automorphisms of Hilbert space effect algebras, {\em Comm. Math. Phys.} {\bf 223} (2001),  437 - 450. 

\bibitem{Mol}L. Moln\' ar, Order automorphisms on positive definite operators and a few applications,
{\em Linear Algebra Appl.} {\bf 434} (2011), 2158 - 2169.

\bibitem{Mo3}L. Moln\' ar, On the nonexistence of order isomorphisms between the sets of all self-adjoint and all positive definite operators, 
{\em Abstr. Appl. Anal.} (2015), Art. ID 434020, 6 pp. 

\bibitem{MoK}L. Moln\' ar and E. Kov\' acs, An extension of a characterization of the automorphisms
of Hilbert space effect algebras, {\em Rep. Math. Phys.} {\bf 52} (2003), 141 - 149.

\bibitem{MoN}L. Moln\' ar and G. Nagy, Spectral order automorphisms on Hilbert space effects and observables: the 2-dimensional case, {\em Lett. Math. Phys.} {\bf 106} (2016), 535--544.



\bibitem{MoP}L. Moln\' ar and Z. P\' ales, $\perp$-order automorphisms of Hilbert space effect algebras:
The two-dimensional case, {\em J. Math. Phys.} {\bf 42} (2001), 1907--1912.

\bibitem{MoS}L. Moln\' ar and P. \v Semrl, Spectral order automorphisms of the spaces of Hilbert space effects and
observables, {\em Lett. Math. Phys.} {\bf 80} (2007), 239--255.

\bibitem{MS}M. Mori and P. \v Semrl, Loewner's theorem for maps on operator domains, {\em Canad. J. Math.} {\bf 75}  (2023), 912--944.

\bibitem{MS1}M. Mori and P. \v Semrl, Optimal version of the fundamental theorem of chronogeometry, {\em Adv. Math.} {\bf 480} (2025), 110528, 85pp.

\bibitem{RW}M. Roelands and M. Wortel, Order isomorphisms on order intervals of atomic JBW-algebras, {\em Integral Equations Operator Theory} {\bf 92} (2020), Paper No. 33, 20pp.

\bibitem{SeT}Z. Sebesty\' en and Z. Tarcsay, On the square root of a positive selfadjoint operator,
{\em Period. Math. Hung.} {\bf 75}  (2017), 268--272. 

\bibitem{Se0}P. \v Semrl, Comparability preserving maps on Hilbert space effect algebras, {\em Comm. Math. Phys.} {\bf 313} (2012), 375--384.


\bibitem{Se1}P. \v Semrl, Symmetries of Hilbert space effect algebras, {\em J. London Math. Soc.}
{\bf  88}  (2013), 417 - 436.

\bibitem{Se4}P. \v Semrl,  Automorphisms of Hilbert space effect algebras,
{\em J. Phys.  A}
{\bf 48}
(2015), 195301,
18pp.

\bibitem{Se6}P. \v Semrl, Order isomorphisms of operator intervals,  {\em Integral Equations Operator Theory} {\bf 89} (2017), 1--42.

\bibitem{VIR}Hendrik van Imhoff and Mark Roelands, Order isomorphisms between cones of JB-algebras, {\em Studia Math.} {\bf 254} (2020), 179--198.
\end{thebibliography}
\end{document}